\numberwithin{equation}{section}
\theoremstyle{plain}
\newtheorem{theorem}{Theorem}[section]
\newtheorem{proposition}[theorem]{Proposition}
\newtheorem{lemma}[theorem]{Lemma}
\newtheorem{example}[theorem]{Example}
\newtheorem{corollary}[theorem]{Corollary}
\theoremstyle{definition}
\newtheorem{remark}[theorem]{Remark}
\newtheorem*{theorem*}{Theorem}
\newcommand{\calk}{\mathcal K}
\newcommand{\frkm}{\mathfrak m}
\begin{document}
\title{Stability analysis of an extended quadrature method of moments for kinetic equations}

\author{Ruixi Zhang}
\address{Department of Energy and Power Engineering, Tsinghua University\\
    Beijing, 100084, China}
\email{1553548358@qq.com}

\author{Qian Huang*}
\address{Department of Energy and Power Engineering, Tsinghua University\\
    Beijing, 100084, China}
\thanks{* Corresponding author}
\email{huangqian@tsinghua.edu.cn; hqqh91@qq.com}

\author{Wen-An Yong}
\address{Department of Mathematical Sciences, Tsinghua University\\
    Beijing, 100084, China \\
    Yanqi Lake Beijing Institute of Mathematical Sciences and Applications\\
    Beijing 101408, China}
\email{wayong@tsinghua.edu.cn}

\keywords{kinetic equation, extended quadrature method of moments, BGK model, hyperbolicity, structural stability condition}

\vskip .2truecm
\begin{abstract}
    This paper performs a stability analysis of a class of moment closure systems derived with an extended quadrature method of moments (EQMOM) for the one-dimensional BGK equation. The class is characterized with a kernel function. A sufficient condition on the kernel is identified for the EQMOM-derived moment systems to be strictly hyperbolic. We also investigate the realizability of the moment method. Moreover, sufficient and necessary conditions are established for the two-node systems to be well-defined and strictly hyperbolic, and to preserve the dissipation property of the kinetic equation.
\end{abstract}

\maketitle

\normalem

\section{Introduction}
\label{sec:intro}

By describing the evolution of problem-specific distribution functions, the kinetic models are founded with a solid basis in a wide range of complex interacting systems.
For instance, the celebrated Boltzmann equation governs the distribution of the molecular velocity and is believed to better characterize the rarefied flows where their hydrodynamic counterparts, being Euler and/or Navier-Stokes equations, become less reliable \cite{Harris2004}.
However, solving the kinetic equation is challenging.
For one thing, the binary collision operator of the Boltzmann equation causes quadratic costs while treating the velocity dependence.
A widely-accepted measure is to apply the BGK operator which models the collision as a relaxation process towards the local equilibrium \cite{bgk1954}.
This model not only reduces the computational costs, but also has the desired conservation laws and an $H$-theorem characterizing the dissipation properties due to collisions \cite{Harris2004}.

On the other hand, the high dimension of the phase space raises significant difficulties in computation, even for the BGK equation.
Several deterministic methods have thus been developed \cite{Dima2014,Mieu2000}, including the discrete velocity model, the spectral methods, and various methods of moments, to remove the dependence upon the molecular velocity and deduce spatial-time models of macroscopic variables from the kinetic equation.
This paper focuses on the method of moments. In the moment system, only the lower-order moments have clear physical interpretations after being related to the density, mean velocity and temperature (internal energy) of the system \cite{Harris2004}.
The higher-order moments may provide additional information beyond the classical hydrodynamic models.

All moment systems need to be closed, which is mostly done by reconstructing the distribution from the transported moments \cite{MarFox2013}.
The well-known Grad’s 13-moment theory was established based on a linear expansion of the Maxwellian \cite{Grad1949}, but far away from equilibrium, this ansatz can lead to negative values of distributions.
In contrast, the quadrature-based method of moments \cite{Mc1997} was proposed with nonlinear reconstruction of the distribution function, and such a treatment seems to be suitable for non-equilibrium flows.
However, approximating the distribution with a linear combination of multiple Dirac $\delta$-functions with unknown centers \cite{Mc1997}, the method (called QMOM) fails in the simulation of BGK equation due to the occurrence of singularity \cite{Fox2008}.

This highlights the importance of understanding the mathematical properties of the moment closure systems, which are usually first-order PDEs. For real-world physical models, the PDE is expected to be hyperbolic so that the system is robust against small perturbations of the initial data \cite{Serre1999}.
Indeed, the unphysical behaviors of the Grad's theory and the QMOM approach can both be attributed to the lack of hyperbolicity \cite{Cai13,Chalons2012}.
Let us mention some efforts to achieve hyperbolic regulation of the Grad's theory \cite{Cai13,Struchtrup2003} and other moment closure systems \cite{KoRo2020}.
Furthermore, a thorough stability analysis should as well account for the source term of the model, as the Boltzmann and BGK collisions are both featured with $H$-theorems \cite{Harris2004}.
For the moment closure system, it is believed that the structural stability condition proposed in \cite{Yong1999} for hyperbolic relaxation systems is a proper characterization of the dissipation property.
The condition specifies how the source term should be coupled with the hyperbolic part in the vicinity of the equilibrium.
Admitting such a structure, the resultant moment system is compatible with the classical theories \cite{Yong1999}.
Recently, it was shown that the structural stability condition is fulfilled by many moment closure systems, including the hyperbolic regularization models of rarefied gases \cite{Di2017,Zhao2017} and a series of hyperbolic shallow water moment models \cite{Huang2022}.

The objective of this paper is to investigate the stability properties of the extended QMOM (termed EQMOM) for the BGK equation. In EQMOM, the velocity distribution is reconstructed as a sum of multiple continuous kernel functions instead of the $\delta$-function \cite{Chalons2010,Yuan2012}.
A list of kernels that can be used for the purpose of EQMOM is summarized in \cite{Pigou2018}.
If the kernel is the Gaussian distribution (denoted Gaussian-EQMOM), it was found that the method is well-defined (namely, the unclosed terms can be uniquely determined), and the resultant moment system respects the structural stability condition \cite{Huang2020}.
However, little is known for other types of kernels, which may be suitable for different scenarios.
For instance, the space plasmas follow kappa distributions with high energy tails deviated from a Maxwellian \cite{PL2010}.
We also remark that the EQMOM with the beta function as the kernel is used to treat a simplified radiative transfer equation \cite{ALL2016}.

This paper deals with the EQMOM induced by a univariate kernel function (see (\ref{eq:feqmom})) and considers the spatial one-dimensional (1-D) BGK equation.
Surprisingly, it is found that not all kernels share the nice properties of the Gaussian kernel. As the main result of this paper, we reveal the exact constraints on the kernel such that the two-node EQMOM ($n=2$) is well-defined, that the resultant moment system is strictly hyperbolic, and that the system properly respects the dissipation property (as judged by the structural stability condition).
These constraints are inequalities of the moments of the kernel function, and are thus easy to check for specific kernels.
Moreover, a sufficient condition is estalished for the $n$-node EQMOM-derived moment system to be strictly hyperbolic.
It is worth mentioning that, in our argument, the realizable moment set can be characterized for general $n\ge 1$, which essentially contains the results in \cite{Chalons2017} (Proposition 3.1 for the two-node Gaussian-EQMOM) as a special case.
This leads to a polynomial which is an answer to the question raised in \cite{MarFox2013} (footnote 16 on p.88).
Furthermore, we present abundant examples of the kernel functions to which our theory can be applied in a straightforward manner.

The remainder of the paper is organized as follows. Our main results are presented in Section \ref{sec:pre}.
Section \ref{sec:inj} is devoted to studying the realizability of the extended quadrature method of moments.
Hyperbolicity of the resultant moment systems is analyzed in Section \ref{sec:hyp}.
For two-node systems, the structural stability condition is verified in Section \ref{sec:stab}.
Section \ref{sec:exp} presents a number of specific kernel functions and investigates their behaviors numerically.
The conclusions are given in Section \ref{sec:conclusions}.

\section{Preliminaries and main results}
\label{sec:pre}

Consider the hypothetical 1-D BGK equation for the distribution $f=f(t,x,\xi)$ with time $t \in \mathbb R_+$, position $x \in \mathbb R$ and velocity $\xi \in \mathbb R$:
\begin{equation} \label{eq:1D-BGK}
  \left \{
  \begin{aligned}
    \partial_t f+\xi \partial_x f &= \frac{1}{\tau}(f^{eq}-f), \\
    f^{eq} = f^{eq}(\rho,U,\theta;\xi) &= \frac{\rho}{\sqrt{2\pi\theta}}\exp \left(-\frac{(\xi-U)^2}{2\theta} \right).
  \end{aligned}
  \right.
\end{equation}
Here $\tau$ is a relaxation time. In the local Maxwellian equilibrium $f^{eq}$, the density $\rho$, mean velocity $U$ and temperature $\theta$ are determined as the velocity moments of $f$:
\[
  \rho = \int_{\mathbb R} fd\xi,\quad
  \rho U = \int_{\mathbb R} \xi fd\xi,\quad
  \rho\theta + \rho U^2 = \int_{\mathbb R} \xi^2fd\xi.
\]

Define the $j$th velocity moments of $f$ as
\[ M_j = M_j(t,x) = \int_{\mathbb R} \xi^jfd\xi \]
for $j\in\mathbb{N}$.
The evolution equation for $M_j$ can be immediately derived from (\ref{eq:1D-BGK}) as
\begin{equation} \label{eq:unc}
\partial_tM_j+\partial_xM_{j+1}= \frac{1}{\tau} \left(\rho \Delta_j^{eq}(U,\sqrt{\theta})-M_j \right)
\end{equation}
with
\[
  \Delta_j^{eq}(u,\sigma) = \int_{\mathbb R} \xi^j \frac{1}{\sqrt{2\pi}\sigma} \exp \left( -\frac{(\xi-u)^2}{2\sigma^2} \right) d\xi.
\]
Notice that the first $n$ equations for $M_0,...,M_{n-1}$ contain the term $\partial_x M_n$. Therefore, any finite truncation of the above equations leads to an unclosed system, and a closure procedure is required.
In this paper, we are concerned with an extended quadrature method of moments (EQMOM) \cite{Chalons2010,Yuan2012}.

\subsection{Extended quadrature method of moments (EQMOM)}

Let $\calk = \calk(\xi) \ge 0$ satisfy
\begin{equation} \label{eq:ker_cond}
  \frkm_j:=\int_{\mathbb R}\xi^j \mathcal K(\xi)d\xi < \infty, \ \forall j \in \mathbb N, \quad \text{and } \frkm_0 = 1.
\end{equation}
In EQMOM, the distribution $f$ is approximated with the following ansatz
\begin{equation} \label{eq:feqmom}
  f(\xi) = \sum_{i=1}^n \frac{w_i}{\sigma} \calk \left(\frac{\xi-u_i}{\sigma}\right)
\end{equation}
with the weights $w_i$, nodes $u_i$ and `width' $\sigma>0$ to be determined. To do this, the first ($2n+1$) lower-order moments are employed:
\begin{equation} \label{eq:MWmap}
  M_j=\sum_{i=1}^nw_i\Delta_j(u_i,\sigma), \quad\textrm{with }
  \Delta_j(u,\sigma) := \int_{\mathbb R} \xi^j \frac{1}{\sigma} \calk \left(\frac{\xi-u}{\sigma} \right) d\xi
\end{equation}
for $j=0,\dots,2n$.
This defines a map $M:= (M_0,...,M_{2n})^T = \mathcal M (W)$ for $W=(w_1,u_1,...,w_n,u_n,\sigma)^T \in \mathbb R^{2n+1}$ with $\sigma>0$.
Here the superscript `$T$' denotes the transpose of a vector or matrix.

Suppose $\mathcal M$ is injective on a certain domain $W \in \Omega \subset \mathbb R^{2n+1}$. Then for any $M \in\mathbb{G}:= \mathcal M (\Omega)$, there exists a unique $W = \mathcal M^{-1}(M)$ solving (\ref{eq:MWmap}).
In this way, the EQMOM is \textit{well-defined} and the next moment $M_{2n+1}$ can be evaluated as a function of $M$:
\begin{equation} \label{eq:mlast}
   M_{2n+1}=\mathcal{M}_{2n+1}(M) :=\sum_{i=1}^nw_i\Delta_{2n+1}(u_i,\sigma).
\end{equation}
Consequently, the first ($2n+1$) equations in (\ref{eq:unc}) are closed as a system of first-order PDEs:
\begin{equation} \label{eq:eqmomsys}
  \partial_t M + \partial_x (M_1,...,M_{2n+1})^T = \frac{1}{\tau} \left(\rho\Delta^{eq}(U,\sqrt{\theta}) -M \right).
\end{equation}
Here $\Delta^{eq}(U,\sqrt{\theta})=\left(\Delta_0^{eq}(U,\sqrt{\theta}),\dots,\Delta_{2n}^{eq}(U,\sqrt{\theta})\right)^T \in \mathbb R^{2n+1}$ with $\rho=M_0$, $\rho U=M_1$ and $\rho(\theta+U^2)=M_2$.

The main goal of this paper is to investigate the injectivity of the map $\mathcal M$ in (\ref{eq:MWmap}) for the general kernel $\calk(\xi)$, where the injectivity is closely related to the realizability of moments.
Such analyses are useful to design efficient and robust algorithms to solve $W$ from (\ref{eq:MWmap}).
Moreover, we analyze hyperbolicity of the moment closure system (\ref{eq:eqmomsys}) and its dissipation property inherited from the the $H$-theorem of the kinetic equation.

\begin{remark} \label{rem:norm}
  For the sake of simplicity, throughout this paper we  assume that the kernel function $\calk(\xi)$ is \textit{normalized}, in the sense that $\frkm_0=1$, $\frkm_1=0$ and $\frkm_2=1$.
  In fact, a general $\calk(\xi)$ can be normalized as $\calk^+ (\xi) = h \calk (h\xi + \xi_0)$ with $\xi_0=\frkm_1$ and $h = \sqrt{\frkm_2-\frkm_1^2}$ (note that $\frkm_2 \ge \frkm_1^2$ due to the Cauchy-Schwartz inequality).
  Moreover, the map $\mathcal M_{2n+1}(M)$ in (\ref{eq:mlast}) is the same as that derived from $\calk(\xi)$, so the moment closure system (\ref{eq:eqmomsys}) is unchanged with the normalization.
\end{remark}

\subsection{Structural stability condition} \label{sec:ssc}

For smooth solutions, the balance laws (\ref{eq:eqmomsys}) can be written as
\begin{equation} \label{eq:1DPDE}
  \partial_t M+A(M)\partial_x M = S(M) := \frac{1}{\tau} \left(\rho\Delta^{eq}(U,\sqrt{\theta}) -M \right)
\end{equation}
with coefficient matrix
\begin{equation} \label{eq:defA}
A(M)=
\begin{bmatrix}
  0 & 1 &&&\\
  & 0 & 1&&\\
  && \ddots & \ddots &\\
  &&& 0 & 1\\
  a_0 & a_1 & \cdots & a_{2n-1} &a_{2n}
\end{bmatrix},
\end{equation}
where $a_j(M)= \partial \mathcal M_{2n+1} / \partial M_j$ for $j=0,\dots,2n$.
It is called \textit{hyperbolic} if $A(M) \in \mathbb R^{(2n+1) \times (2n+1)}$ has $(2n+1)$ linearly-independent real eigenvectors \cite{Serre1999}. If $A(M)$ has $(2n+1)$ distinct real eigenvalues, it is called \textit{strictly hyperbolic}. Obviously, strict hyperbolicity implies hyperbolicity.
The dissipativeness of the moment system can be characterized with the structural stability condition proposed in \cite{Yong1999} for hyperbolic relaxation systems.

Assume that the equilibrium manifold $\mathcal E = \{M \in \mathbb G \mid S(M)=0\}$ is not empty. Denote by $S_M(M)$ the Jacobian matrix of $S(M)$. The structural stability condition reads as
\begin{itemize}
    \item [(I)] For any $M \in \mathcal E$, there exist invertible matrices $P=P(M)\in \mathbb R^{(2n+1) \times (2n+1)}$ and $\hat T = \hat T(M) \in \mathbb R^{r \times r}$ ($0<r \le 2n+1$) such that
    \[
    P S_M(M) P^{-1} = \text{diag}(\textbf{0}_{(2n+1-r) \times (2n+1-r)}, \hat T).
    \]

    \item [(II)] For any $M \in \mathbb G$, there exists a positive definite symmetric matrix $A_0 = A_0(M)$ such that $A_0 A(M) = A^T(M) A_0$.

    \item [(III)] For any $M \in \mathcal E$, the coefficient matrix and the source are coupled as
    \[
        A_0 S_M (M) + S_M^T (M) A_0 \le - P^T
        \begin{bmatrix}
            0 & 0 \\
            0 & I_r
        \end{bmatrix}
        P.
    \]
\end{itemize}
Here $I_r$ is the unit matrix of order $r$.

\begin{remark} \label{rem:sym}
  Recently, it has been demonstrated that several moment models from the kinetic equations respect the structural stability condition, including the Gaussian-EQMOM \cite{Huang2020} and the hyperbolic regularization models \cite{Di2017,Zhao2017}.
  For the 1-D system (\ref{eq:1DPDE}), Condition (II) is satisfied if and only if the system is hyperbolic \cite{Huang2020}.
  Condition (III) can be regarded as a proper manifestation of the dissipation property inherited from the kinetic model. See detailed discussions in \cite{Yong2001}.
\end{remark}

\subsection{Main results}

Our main results are collected in this subsection.
As mentioned in Remark \ref{rem:norm}, we assume that the kernel $\calk(\xi)$ is normalized, that is, $\frkm_0=\frkm_2=1$ and $\frkm_1=0$.

To state the results, we recursively define a sequence of numbers associated with $\{\frkm_j\}:$
\begin{equation} \label{eq:bj}
  b_0=1,\quad b_j= - \sum_{k=1}^j \frac{\frkm_k}{k!}b_{j-k} \quad \text{for } j=1,2,\dots
\end{equation}
and a number of auxiliary moments associated with  $M=(M_0,...,M_{2n})^T \in \mathbb R^{2n+1}$:
\begin{equation} \label{eq:Mstar}
 M^*=\{M_j^*\}_{j=0}^{2n},\quad
 M_j^*=M_j^*(\sigma) = \sum_{k=0}^j b_k \sigma^k \frac{j!}{(j-k)!} M_{j-k}.
\end{equation}
Moreover, for $M=(M_0,...,M_{2n})^T$ we introduce the Hankel matrix \cite{gtm277} as
\begin{equation}
H_k(M)=
\begin{bmatrix}
  M_0 & M_1 & \cdots & M_k \\
  M_1 & M_2 & \cdots & M_{k+1} \\
  \vdots & \vdots & \ddots & \vdots \\
  M_k & M_{k+1} & \cdots & M_{2k}
\end{bmatrix} \in \mathbb R^{(k+1)\times (k+1)} \quad
\text{for } k \le n.
\end{equation}
This is a real symmetric matrix.

Our first result is
\begin{theorem} \label{prop:sigpoly}
  Given $M=(M_0,\dots,M_{2n})^T \in \mathbb R^{2n+1}$, set
  \begin{equation} \label{eq:defsigpoly}
   P_n(\sigma;M) = \det H_n(M^*(\sigma)).
  \end{equation}
  The following statements are equivalent.

  (i). There exists a unique $W \in \Omega = \Omega'\times\{\sigma>0\}$ with
  \begin{equation} \label{eq:Omega}
   \Omega'=\{(w_1,u_1,...,w_n,u_n)\in\mathbb{R}^{2n}\mid \ w_i>0, \ \forall i; \ u_1< \dots < u_n \},
  \end{equation}
  such that $\mathcal M(W)=M$.

  (ii). $P_n(\sigma;M)=0$ has a unique positive root $\sigma_0$ such that the Hankel matrix $H_{n-1}(M^*(\sigma_0))$ is positive definite.
\end{theorem}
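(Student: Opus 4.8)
The plan is to connect the EQMOM reconstruction problem to the classical truncated Hankel moment problem via a change of variables that removes the kernel width $\sigma$. The key observation is that the ansatz (\ref{eq:feqmom}) is a "smearing" of a discrete measure $\sum_i w_i \delta_{u_i}$ by the kernel $\calk$, and that the moments $\Delta_j(u,\sigma)$ of a single shifted-scaled kernel are polynomials in $u$ and $\sigma$. The role of the auxiliary moments $M^*_j(\sigma)$ defined in (\ref{eq:Mstar}) is precisely to invert this smearing: I would first prove the identity that, \emph{if} $M = \mathcal M(W)$ with $W = (w_1,u_1,\dots,w_n,u_n,\sigma)$, then $M^*_j(\sigma) = \sum_{i=1}^n w_i u_i^j$ for $j = 0,\dots,2n$. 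This is a formal computation using the generating function: the coefficients $b_k$ in (\ref{eq:bj}) are designed so that $\sum_k b_k z^k / k!$ is the reciprocal of $\sum_k \frkm_k z^k / k!$ (equivalently, $\sum_k b_k z^k/k!$ is the inverse in the sense that its product with $\int e^{z\xi}\calk(\xi)\,d\xi$ is $1$), which exactly cancels the kernel's contribution to each moment order. I expect this "deconvolution identity" to be the technical heart, and it should be stated as a separate lemma.

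Granting that identity, the equivalence becomes almost a restatement of the classical theory of the truncated Hankel (Hamburger) moment problem on $\mathbb R$. Specifically: statement (i) says $M$ is realized by a unique positive-weight, strictly-ordered $n$-node configuration together with a positive $\sigma$. By the deconvolution identity, fixing $\sigma$ turns this into asking whether the vector $M^*(\sigma) = (M^*_0(\sigma),\dots,M^*_{2n}(\sigma))$ is the moment sequence (up to order $2n$) of an $n$-atomic measure with positive masses at $n$ distinct real points. The classical criterion (see e.g. \cite{gtm277}) is: such a representing measure exists and is unique precisely when $H_n(M^*(\sigma))$ is positive semidefinite and singular with $H_{n-1}(M^*(\sigma))$ positive definite — equivalently, $\det H_n(M^*(\sigma)) = 0$ and $H_{n-1}(M^*(\sigma)) \succ 0$. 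So for each fixed admissible $\sigma$, "$M^*(\sigma)$ is $n$-atomic realizable" $\iff$ "$P_n(\sigma;M) = 0$ and $H_{n-1}(M^*(\sigma)) \succ 0$". Quantifying over $\sigma > 0$ and matching the uniqueness claims on both sides then yields the theorem: (i) holds iff there is a \emph{unique} $\sigma_0 > 0$ with $P_n(\sigma_0;M)=0$ and $H_{n-1}(M^*(\sigma_0)) \succ 0$, which is exactly (ii).

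The steps I would carry out, in order: (1) prove the deconvolution lemma $M^*_j(\sigma) = \sum_i w_i u_i^j$ via the generating-function/formal-power-series argument, establishing that $\{b_k\}$ inverts $\{\frkm_k/k!\}$; (2) recall and cite the classical characterization of $n$-atomic solvability of the truncated Hankel moment problem in terms of $H_n$ being singular positive semidefinite with $H_{n-1}$ positive definite, and note that in that case the atoms are the roots of the orthogonal polynomial built from $H_{n-1}$ and the weights are then uniquely determined and positive; (3) observe that, once $\sigma$ is fixed, the $W$-to-$M$ map factors as $(w_i,u_i) \mapsto (\sum_i w_i u_i^j)_j = M^*(\sigma) \mapsto M$, with the second arrow a fixed invertible affine map (triangular in the $M_j$ with unit diagonal, by (\ref{eq:Mstar})), so injectivity/realizability questions transfer cleanly; (4) assemble the equivalence, being careful with the two uniqueness assertions — uniqueness of $W$ given $\sigma_0$ is immediate from step (2), and uniqueness of $\sigma_0$ itself is the remaining content that must be read off from the quantifier structure.

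The main obstacle I anticipate is not any single step but the bookkeeping in step (1): verifying that the recursively-defined $b_k$ genuinely produce the clean identity $M^*_j(\sigma) = \sum_i w_i u_i^j$ requires carefully expanding $\Delta_j(u,\sigma) = \int \xi^j \sigma^{-1}\calk((\xi-u)/\sigma)\,d\xi = \sum_{\ell=0}^j \binom{j}{\ell} u^{j-\ell} \sigma^\ell \frkm_\ell$, substituting into (\ref{eq:Mstar}), and collecting the double sum so that the coefficient of $w_i u_i^{j}$ is $1$ and all coefficients of $w_i u_i^{j-m}\sigma^m$ for $m \geq 1$ vanish — the latter being exactly the convolution relation (\ref{eq:bj}) after dividing through by $(j-m)!$. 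A secondary subtlety is matching conventions: the classical Hankel results are usually stated for existence of \emph{some} representing measure, and one must invoke the precise rank/signature version (the measure is $n$-atomic, with positive masses, and the atoms real and distinct) rather than the weaker Hamburger solvability criterion; the normalization $\frkm_0 = 1$ from (\ref{eq:ker_cond}) ensures $M^*_0 = M_0$ so the mass is correctly tracked.
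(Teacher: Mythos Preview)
Your proposal is correct and follows essentially the same route as the paper: derive the deconvolution identity $M_j^*(\sigma) = \sum_i w_i u_i^j$ (the paper obtains this as (\ref{eq:defmstar}) by exactly the double-sum collapse you describe, using the convolution inverse (\ref{eq:bj})), then reduce to the classical truncated Hamburger moment problem and match the uniqueness quantifiers on $\sigma$. The only cosmetic difference is that the paper cites the classical result for the first $2n$ moments (Lemma~\ref{lem:hankpos}) and then handles $M^*_{2n}$ separately via the observation that $\det H_n$ is affine in $M^*_{2n}$ with leading coefficient $\det H_{n-1}>0$, whereas you invoke the full $(2n{+}1)$-moment characterization (singular PSD $H_n$ with $H_{n-1}\succ 0$) in one stroke.
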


\begin{remark}
  Clearly, a similar conclusion can be formulated when some of the weights $w_i$ are zero or the centers $u_i$ coincide.
  In that case, one can find a unique index $k\le n$ such that $w_i>0$ for any $i=1,\dots,k$ and all the $u_i$'s are distinct.
\end{remark}

\begin{remark} \label{rem:invM}
  Statement (ii) serves as an implicit realizable condition for $M$.
  For $n=2$, this condition is exactly that in \cite{Chalons2017} for the Gaussian kernel.
  Moreover, Theorem \ref{prop:sigpoly} suggests a key step in inverting the map $M=\mathcal M(W)$, namely, finding $\sigma$ as a root of the polynomial $P_n(\sigma) = \det H_{n}(M^*(\sigma))$ of degree $n(n+1)$.
  For $n=2$, the polynomial for even and normalized kernels is
  \[
    P_2(\sigma)= (5-\frkm_4)\sigma_1^3 + 2\theta(3-\frkm_4)\sigma_1^2 + (M_4'-\frkm_4\theta^2)\sigma_1 + M_3'^2
  \]
  with $\sigma_1=\sigma^2-\theta$,
  \[
    M_3' = \frac{M_3}{M_0} - 3U\theta - U^3, \quad
    M_4' = \frac{M_4}{M_0} - 4UM_3' - 6U^2\theta - U^4,
  \]
  $U=M_1/M_0$ and $\theta = M_2/M_0 - U^2$.
  Once $\sigma$ is found, other components of $W$ can be determined by the existing algorithms \cite{MarFox2013} (see Remark \ref{rem:3.1}).
\end{remark}

As a corollary of this theorem, we have
\begin{corollary}[Injectivity] \label{thm:inj}
  For $n=2$, the map $M=\mathcal M(W)$ in (\ref{eq:MWmap}) is injective for $W \in \Omega$ if and only if the inequality $\frkm_4 \ge 3 + \frac{9}{8} \frkm_3^2$ holds for $\calk(\xi)$.
\end{corollary}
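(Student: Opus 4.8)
The plan is to deduce the corollary from Theorem~\ref{prop:sigpoly} with $n=2$, by first turning the injectivity of $\mathcal M$ on $\Omega$ into a statement about the roots of $P_2(\sigma;M)$, then into an elementary monotonicity property of a single rational function of $\sigma$, and finally reading off the threshold on $\calk$. For a fixed $\sigma$, the system $M_j=\sum_i w_i\Delta_j(u_i,\sigma)$, $j=0,\dots,3$, is the classical two-node quadrature problem for the deconvolved moments $M_0^*,\dots,M_3^*$ and has at most one solution with $w_i>0$ and $u_1<u_2$; hence two distinct $W\in\Omega$ with the same image $M$ must differ in $\sigma$, and by Theorem~\ref{prop:sigpoly} the admissible values of $\sigma$ are exactly the positive roots of $P_2(\sigma;M)$ for which $H_1(M^*(\sigma))$ is positive definite. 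Using $\frkm_1=0$ and $\frkm_2=1$ one computes $M_0^*=M_0$, $M_1^*=M_1$ and $M_2^*(\sigma)=M_2-\sigma^2 M_0$, so $\det H_1(M^*(\sigma))=M_0^2(\theta-\sigma^2)$ with $\theta=M_2/M_0-(M_1/M_0)^2$; thus $H_1(M^*(\sigma))$ is positive definite exactly when $M_0>0$ and $\sigma\in(0,\sqrt\theta)$ (which forces $\theta>0$). Consequently $\mathcal M$ is injective on $\Omega$ if and only if, for every $M$ with $M_0>0$, the polynomial $P_2(\cdot;M)$ has at most one root in $(0,\sqrt\theta)$.

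Next I would convert this into a monotonicity statement. By the translation and scaling invariance of the moment problem (cf.\ Remark~\ref{rem:norm}) it suffices to take $M_0=1$ and $M_1=0$, so that $M$ is parametrized by $(\theta,M_3',M_4')$ as in Remark~\ref{rem:invM}, with $M_2^*=\theta-\sigma^2$, $M_3^*=M_3'-\frkm_3\sigma^3$ and $M_4^*=M_4'-6\theta\sigma^2+(6-\frkm_4)\sigma^4$. Since $M_4^*$ occurs in $H_2(M^*)$ only in the lower-right corner, with cofactor $\det H_1(M^*(\sigma))=\theta-\sigma^2\neq0$ on $(0,\sqrt\theta)$, the equation $P_2(\sigma;M)=\det H_2(M^*(\sigma))=0$ can be solved uniquely for $M_4'$, giving $M_4'=F(\sigma;\theta,M_3')$, a rational function on $(0,\sqrt\theta)$ which reduces to the one implicit in the cubic displayed in Remark~\ref{rem:invM} when $\frkm_3=0$ and otherwise carries extra terms proportional to $\frkm_3$. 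Two roots of $P_2(\cdot;M)$ in $(0,\sqrt\theta)$ for a common $M$ are exactly two points at which $F(\cdot;\theta,M_3')$ takes the same value, so $\mathcal M$ is injective on $\Omega$ if and only if $F(\cdot;\theta,M_3')$ is strictly monotone on $(0,\sqrt\theta)$ for every $\theta>0$ and every $M_3'\in\mathbb R$.

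It then remains to show that this uniform monotonicity holds precisely when $\frkm_4\ge 3+\tfrac98\frkm_3^2$. I would compute $F'$ explicitly, clear denominators to obtain a polynomial $Q(\sigma;\theta,M_3')$ whose zeros in $(0,\sqrt\theta)$ are the critical points of $F$, and use the homogeneity $\sigma\mapsto\lambda\sigma$, $\theta\mapsto\lambda^2\theta$, $M_3'\mapsto\lambda^3 M_3'$ to normalize $\theta=1$; the requirement that $Q(\cdot;1,M_3')$ have no zero in $(0,1)$ for every $M_3'$ should collapse, after completing the square in the variable $M_3'$, to the single discriminant-type inequality $9\frkm_3^2\le 8(\frkm_4-3)$. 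As a check, for an even kernel ($\frkm_3=0$) this is the direct statement that $F'>0$ on $(0,\sqrt\theta)$ exactly when $\frkm_4\ge3$, which the displayed cubic in Remark~\ref{rem:invM} makes transparent. For the converse, when $\frkm_4<3+\tfrac98\frkm_3^2$ the same analysis produces $\theta$, $M_3'$ and a critical point $\sigma_*\in(0,\sqrt\theta)$ of $F$; choosing $M_4'$ close to $F(\sigma_*)$ on the appropriate side makes $F$ attain that value at two distinct points of $(0,\sqrt\theta)$, and by the proof of Theorem~\ref{prop:sigpoly} each such admissible root of $P_2$ corresponds to an honest two-atom quadrature, i.e.\ to a point of $\Omega$; hence $\mathcal M$ has two preimages there and is not injective.

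I expect the main obstacle to be this last computation for a non-even kernel: the half-integer powers of $\sigma$ introduced by $\frkm_3$ make $F'$ and $Q$ considerably messier than in the Gaussian-type setting, and one must verify that, after optimizing over all admissible $(\theta,M_3')$, the obstruction to monotonicity is governed by exactly $\frkm_4-\tfrac98\frkm_3^2$ and not by some more intricate combination of $\frkm_3$ and $\frkm_4$. A secondary technical point is to make the realizability check in the necessity direction fully rigorous, confirming that both constructed solutions are genuine points of $\Omega$ rather than spurious roots of $P_2$.
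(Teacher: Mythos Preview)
Your plan is correct and would succeed, but it differs from the paper's route. For sufficiency the paper computes the Jacobian of $\mathcal M$ directly, obtaining
\[
\det\frac{\partial\mathcal M}{\partial W}=w_1w_2\,\sigma^3(u_1-u_2)^4\Bigl[w_1\,q\Bigl(\tfrac{u_1-u_2}{\sigma}\Bigr)+w_2\,q\Bigl(\tfrac{u_2-u_1}{\sigma}\Bigr)\Bigr],\qquad q(x)=x^2+3\frkm_3 x+2(\frkm_4-3),
\]
so the threshold is exactly the discriminant condition $9\frkm_3^2\le 8(\frkm_4-3)$ for $q\ge0$; for necessity the paper perturbs the degenerate image $M_b=\mathcal M(\tfrac12,0,\tfrac12,0,1)$ and uses Theorem~\ref{prop:sigpoly} to exhibit a second admissible root of $P_2$. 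Your approach instead packages both directions into the single monotonicity question for $F$, which ties global injectivity directly to Theorem~\ref{prop:sigpoly} without a separate local-to-global step; the paper's Jacobian route reaches the quadratic $q$ more quickly but treats the two directions by different mechanisms.

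Two minor corrections to your outline. First, there are no half-integer powers: the $M_j^*$ are polynomial in $\sigma$ (e.g.\ $M_3^*=M_3'-\frkm_3\sigma^3$), so $F$ and $F'$ are rational in $\sigma$ and the calculation is cleaner than you anticipate. Indeed, with $\theta=1$ one finds that $(1-\sigma^2)^2F'(\sigma)/(2\sigma)$ is a monic quadratic in $M_3'$ whose discriminant factors as $(1-\sigma^2)^2\bigl[9\frkm_3^2\sigma^2-16-8(\frkm_4-5)\sigma^2\bigr]$, and requiring this to be nonpositive on $(0,1)$ gives exactly $\frkm_4\ge3+\tfrac98\frkm_3^2$. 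Second, your realizability worry in the necessity direction is unnecessary: the construction inside the proof of Theorem~\ref{prop:sigpoly} already shows that every admissible root of $P_2$ produces an honest preimage in $\Omega$.
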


\begin{remark}\label{rem:Mtot}
  For $n=2$, it is not difficult to see that $M_5$ can be expressed in terms of $M_0,...,M_4$ when $u_1=u_2$. As a consequence, the condition of Corollary \ref{thm:inj} ensures that the EQMOM is well defined on $\mathcal M(\Omega^{tot})$ with
  \[
    \Omega^{tot}=\{W \in \mathbb R^5 | w_1>0, \ w_2>0,\ u_1\leq u_2,\ \sigma>0\}.
  \]
\end{remark}

Suppose the map $M=\mathcal M(W)$ is injective on $\Omega$ for general $n$. Our second result is
\begin{theorem}[Hyperbolicity] \label{prop:schy2}
  If the $b$-polynomial
  \[
    p(t):=\sum_{j=0}^{2n+1}b_{2n+1-j}t^j
  \]
  has $(2n+1)$ real roots (counting multiplicity) and at least two roots are nonzero, then the $n$-node EQMOM moment system (\ref{eq:eqmomsys}) is strictly hyperbolic on $\mathcal M(\Omega)$.
\end{theorem}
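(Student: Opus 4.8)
The plan is to compute the characteristic polynomial of $A(M)$ in the parameters $W$ and to show it has $2n+1$ distinct real roots exactly when $p$ is real-rooted. Since $A(M)$ in (\ref{eq:defA}) is a companion matrix, a right eigenvector for an eigenvalue $\lambda$ is $(1,\lambda,\dots,\lambda^{2n})^{T}$, so --- changing to the coordinates $W=(w_1,u_1,\dots,w_n,u_n,\sigma)$, which is legitimate by the injectivity assumption --- the number $\lambda$ is an eigenvalue iff there is a nonzero variation $\delta W$ whose induced moment variations form a geometric progression, $\delta M_k=\lambda^{k}\delta M_0$ for $k=1,\dots,2n+1$, where $\delta M_k=\sum_i\bigl(\delta w_i\,\Delta_k(u_i,\sigma)+w_i\delta u_i\,\partial_u\Delta_k(u_i,\sigma)\bigr)+\delta\sigma\,\partial_\sigma\mathcal M_k$. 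The first $2n$ of these conditions are the companion structure; the last one encodes the closure $\mathcal M_{2n+1}$.

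I would then switch to exponential generating functions. With $m(s):=\int_{\mathbb R}e^{s\xi}\calk(\xi)\,d\xi$ one has $\sum_k\Delta_k(u,\sigma)t^k/k!=e^{ut}m(\sigma t)$, and, using the identity behind Theorem~\ref{prop:sigpoly} that the auxiliary moments satisfy $M_k^{*}=\sum_iw_iu_i^{k}$, the moment generating function factors as $\hat M(t):=\sum_kM_kt^k/k!=m(\sigma t)\,\phi(t)$ with $\phi(t)=\sum_iw_ie^{u_it}$. Writing $B(s):=1/m(s)=\sum_jb_js^j$ (so $p(t)=\sum_jb_jt^{2n+1-j}$ is the reversal of the degree-$(2n+1)$ truncation of $B$), the geometric-progression requirement becomes: for some $\delta\sigma$ and $\delta\phi(t)=\sum_i(\delta w_i+w_i\delta u_i\,t)e^{u_it}$ not all zero,
\[
\delta\phi(t)+\delta\sigma\,t\,(\log m)'(\sigma t)\,\phi(t)\equiv c\,e^{\lambda t}B(\sigma t)\pmod{t^{2n+2}},\qquad c=\textstyle\sum_i\delta w_i.
\]
Now $\delta\phi$ runs exactly over $\ker\pi(\partial)^2$, where $\pi(x):=\prod_{i=1}^n(x-u_i)$ is the node polynomial; since $\pi(\partial)^2$ has order $2n$ and the two sides agree to order $2n+1$, applying $\pi(\partial)^2$ kills $\delta\phi$ and reduces the condition to a $2\times2$ homogeneous system $cL_a(T_\lambda)=\delta\sigma\,L_a(U)$, $a=1,2$, where $T_\lambda$ and $U$ are the degree-$(2n+1)$ truncations of $e^{\lambda t}B(\sigma t)$ and $t(\log m)'(\sigma t)\phi(t)$ and $L_1,L_2$ pick off the constant and linear Taylor coefficients after applying $\pi(\partial)^2$. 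Hence the characteristic polynomial is a nonzero constant multiple of $\chi(\lambda)=L_1(T_\lambda)L_2(U)-L_2(T_\lambda)L_1(U)$, and a short computation gives $L_1(T_\lambda)=\sum_{j=0}^{2n}[x^j]\pi(x+\lambda)^2\,j!\,b_j\sigma^{j}$ (degree $2n$, leading term $\lambda^{2n}$) and $L_2(T_\lambda)=\lambda L_1(T_\lambda)+\sum_{j=0}^{2n}[x^j]\pi(x+\lambda)^2\,(j+1)!\,b_{j+1}\sigma^{j+1}$ (degree $2n+1$), while $L_1(U),L_2(U)$ are $W$-dependent constants in $\lambda$. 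For $n=1$ this reproduces $\nu^3-\tfrac32\frkm_3\nu^2-3\nu+\tfrac12\frkm_3=0$ in $\nu=(\lambda-u_1)/\sigma$.

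The remaining step, which I expect to be the crux, is to pass from ``$p$ has $2n+1$ real roots, at least two nonzero'' to ``$\chi$ has $2n+1$ distinct real roots for every admissible $W$''. The route I would take: rewrite the factors $j!\,b_j\sigma^j$ via $j!=\int_0^\infty e^{-u}u^j\,du$, so that $L_1(T_\lambda)$ and $L_2(T_\lambda)$ become averages over $u>0$ of $\pi(\cdot+\lambda)^2$ paired against (a rescaling of) $B$; real-rootedness of $p$ (equivalently of the truncated $B$) then forces $\{L_1(T_\lambda),L_2(T_\lambda)\}$ to be a pair of real-rooted polynomials with interlacing zeros, and since $\chi$ is the fixed combination of the two with coefficients $\pm L_1(U),\pm L_2(U)$ --- the relevant sign of which comes from $w_i>0$ --- the Hermite--Biehler/Obreschkoff criterion yields that $\chi$ is real-rooted. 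Simplicity of the roots, i.e. \emph{strict} hyperbolicity, would then follow from the strict ordering $u_1<\dots<u_n$: a multiple root of $\chi$ would force the eigenvalue ``clusters'' attached to two distinct nodes to overlap, which the node separation excludes; and the hypothesis that $p$ has at least two nonzero roots is precisely what keeps the cluster at each node genuinely spread out, ruling out the degenerate alternative where the eigenvalues would pile up at the $u_i$. The main obstacle is thus the interlacing bookkeeping for general $n$ together with the argument that separation of the nodes is transmitted to the spectrum.
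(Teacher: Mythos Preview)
Your generating-function setup is heading toward the right object, but you have missed the structural simplification that makes the proof short, and the ``interlacing bookkeeping'' you flag as the main obstacle is in fact avoidable entirely.

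First, your formula $L_1(T_\lambda)=\sum_{j}[x^j]\pi(x+\lambda)^2\,j!\,b_j\sigma^j$ is, after Taylor, exactly $\sum_j b_j\sigma^j\partial_\lambda^j\bigl(\pi(\lambda)^2\bigr)$. The paper shows more: the full characteristic polynomial itself is
\[
c(\lambda;M)=\sum_{k=0}^{2n+1}b_k\sigma^k\,\partial_\lambda^k\,g(\lambda;W),\qquad g(\lambda;W)=(\lambda-u_1)^2\cdots(\lambda-u_n)^2(\lambda-\tilde u),
\]
where $\tilde u$ comes from the last column of $\partial\mathcal M/\partial W$ (your $\delta\sigma$ direction). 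So rather than a $2\times2$ determinant in $L_1,L_2$ with $W$-dependent constants $L_a(U)$, one has a single differential operator in $\partial_\lambda$ acting on a polynomial $g$ that is already known to be real-rooted with maximal multiplicity at most $3$ on $\Omega$ (at most $2$ from the $u_i$, possibly $3$ if $\tilde u$ coincides with a node).

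Second, and this is the key idea you are missing: the hypothesis that $p(t)=\sum_j b_{2n+1-j}t^j$ has $2n+1$ real roots means its reverse $\sum_j b_j t^j$ factors as $\prod_{i=1}^{2n+1}(1+d_it)$ over $\mathbb R$, so the operator factors as
\[
\sum_{k}b_k\sigma^k\partial_\lambda^k=\prod_{i=1}^{2n+1}\bigl(1+d_i\sigma\,\partial_\lambda\bigr).
\]
Now one only needs the elementary Rolle-type lemma: if $f$ is real-rooted of degree $N$ with maximal root multiplicity $m(f)$, then for $s\ne0$ the polynomial $f+sf'$ is again real-rooted of degree $N$ with $m(f+sf')=\max\{m(f)-1,1\}$. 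Applying the factors one at a time preserves real-rootedness throughout, and since at least two $d_i$ are nonzero the maximal multiplicity drops from $\le3$ down to $1$, giving strict hyperbolicity. No Hermite--Biehler, no integral representation of $j!$, and no ``cluster'' argument is needed; the simplicity of the eigenvalues is produced mechanically by the multiplicity-reduction lemma, not by node separation per se.

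Your route via Obreschkoff might be completable, but the steps you leave open (why $L_1(T_\lambda)$ and $L_2(T_\lambda)$ interlace, why the signs of $L_a(U)$ cooperate, and why the combination has \emph{simple} roots) are not minor bookkeeping; they are the whole content, and you have not supplied them.
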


Furthermore, for even kernels we have
\begin{theorem}[Hyperbolicity] \label{thm:hyp2}
  Let the kernel $\calk(\xi)$ be an even function. The two-node EQMOM moment system (\ref{eq:eqmomsys}) is strictly hyperbolic for $M\in \mathcal M(\Omega^{tot})$ if and only if $3\le \frkm_4 < 6$.
\end{theorem}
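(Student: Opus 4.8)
The strategy is to analyze the characteristic polynomial of the coefficient matrix $A(M)$ in (\ref{eq:defA}). Since $A(M)$ is a companion-type matrix, its eigenvalues are the roots of $\lambda^{2n+1} - a_{2n}\lambda^{2n} - \cdots - a_1\lambda - a_0 = 0$, so strict hyperbolicity amounts to this degree-$(2n+1)$ polynomial having $2n+1$ distinct real roots. For $n=2$ this is a quintic, and the key is to exploit the structure of the coefficients $a_j = \partial\mathcal{M}_5/\partial M_j$ coming from the EQMOM closure. I would first use the auxiliary-moment apparatus: by the definition (\ref{eq:bj})--(\ref{eq:Mstar}) and Theorem \ref{prop:sigpoly}, the closure is organized around the $M_j^*(\sigma)$, and for an even kernel the relevant polynomial $P_2(\sigma)$ is the cubic written out in Remark \ref{rem:invM}. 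The plan is to transform coordinates from $M$ to the "central" variables $(\rho, U, \theta, \sigma)$ (or equivalently to the $M^*$ variables), in which the Jacobian of the closure simplifies dramatically because of the even symmetry and the scaling structure $\frac{w_i}{\sigma}\calk(\frac{\xi-u_i}{\sigma})$.

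**Reduction to a model form.** Because strict hyperbolicity is a pointwise, Galilean- and scaling-invariant property, I would reduce to the case $U=0$, $\rho=1$, and exploit the even symmetry to reduce further — e.g. examine the representative equilibrium or a one-parameter slice where $u_1 = -u_2 = u$, $w_1 = w_2 = 1/2$. On such a slice the characteristic polynomial should factor (odd/even parts decouple), and one can read off the eigenvalues explicitly as roots of a quadratic and a cubic, or similar. The condition $3 \le \frkm_4 < 6$ should emerge as precisely the range in which: (a) $\frkm_4 \ge 3$ is needed for the closure to be well-defined/realizable at all on $\Omega^{tot}$ (this is the $n=2$, even-kernel specialization of Corollary \ref{thm:inj}, since $\frkm_3 = 0$ forces $\frkm_4 \ge 3$); and (b) $\frkm_4 < 6$ is the discriminant-type condition ensuring the roots of the characteristic polynomial stay distinct and real, with $\frkm_4 = 6$ being the degeneracy where two eigenvalues collide (or a real root becomes complex). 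I expect the boundary value $6$ to show up as the vanishing of a leading coefficient or a perfect-square factor in the discriminant — note the coefficient $(5-\frkm_4)$ and $(3-\frkm_4)$ appearing in $P_2(\sigma)$, and more tellingly that $6 = 2 \cdot 3$ relative to the Gaussian value $\frkm_4 = 3$.

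**Handling both directions.** For the "if" direction ($3 \le \frkm_4 < 6 \Rightarrow$ strict hyperbolicity), after the reduction I would compute the characteristic polynomial's coefficients as rational functions of $(\theta, \sigma, \frkm_4)$ (using $\frkm_3=0$ and normalization $\frkm_0=\frkm_2=1$, $\frkm_1=0$), then show its discriminant is strictly positive throughout the parameter range. A cleaner route may be to find an explicit symmetrizer or to show the characteristic polynomial is a product of factors each of which is manifestly real-rooted (Sturm sequence, or interlacing of roots). For the "only if" direction, I would produce, for each $\frkm_4 \ge 6$ (and for $\frkm_4 < 3$ where the method isn't even well-defined, though that case is excluded by $\mathcal{M}(\Omega^{tot})$ being nonempty only when $\frkm_4 \ge 3$), an explicit $M \in \mathcal{M}(\Omega^{tot})$ — most naturally a boundary/equilibrium point or the symmetric slice above — at which $A(M)$ has a repeated eigenvalue or a complex-conjugate pair, violating strict hyperbolicity.

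**Main obstacle.** The principal difficulty is computing the Jacobian $a_j = \partial\mathcal{M}_5/\partial M_j$ explicitly enough to control the quintic characteristic polynomial: $\mathcal{M}_5$ is defined only implicitly through the inversion $W = \mathcal{M}^{-1}(M)$, so one must differentiate through the relation $M = \mathcal{M}(W)$, i.e. compute $\partial_M W = (\partial_W \mathcal{M})^{-1}$ and then $\partial_M \mathcal{M}_5 = (\partial_W \mathcal{M}_5)(\partial_W\mathcal{M})^{-1}$. Carrying this out in general $M$-coordinates is formidable, so the real work is choosing coordinates — the $M^*(\sigma)$ variables together with $\sigma$ as the root of $P_2$ — in which $\partial_W\mathcal{M}$ is block-triangular or otherwise easily inverted, and in which the even-kernel symmetry kills enough cross terms that the quintic's discriminant becomes a tractable polynomial in $\frkm_4$. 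Once the coordinates are right, verifying the sign of that discriminant on $3 \le \frkm_4 < 6$ and exhibiting the failure at $\frkm_4 = 6$ should be routine algebra.
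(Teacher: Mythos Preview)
Your proposal correctly identifies the skeleton of the argument --- the companion-matrix structure reduces the question to whether a quintic has five distinct real roots, the ``only if'' direction comes from testing a single degenerate point, and the condition $\frkm_4\ge 3$ is exactly the even-kernel case of Corollary~\ref{thm:inj}. For necessity the paper does precisely what you suggest: it evaluates at $W=(\tfrac12,0,\tfrac12,0,1)\in\Omega^{tot}$ (where $u_1=u_2=0$), obtains $c(u)=u(u^4-10u^2+120b_4)$, and reads off $b_4>0\iff \frkm_4<6$.

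The genuine gap is in the ``if'' direction. You frame the main obstacle as inverting $\partial_W\mathcal M$ to get the $a_j$ explicitly and then controlling the quintic's discriminant; you also propose restricting to symmetric slices. Neither will work as stated: the symmetric slice $u_1=-u_2$, $w_1=w_2$ does not cover $\mathcal M(\Omega^{tot})$, and a direct discriminant computation in the full five-parameter family is intractable. The paper bypasses both issues with two structural facts you are missing:
\begin{itemize}
  \item There is an auxiliary polynomial $g(u;W)=\Delta_{2n+1}(u,\sigma)-\sum_j a_j\Delta_j(u,\sigma)$ which factors as $g(u;W)=(u-u_1)^2(u-u_2)^2(u-\tilde u)$ for \emph{every} $W$ (Proposition~\ref{prop:gform}); this comes out of the chain-rule relation $(a_j)\,\partial_W\mathcal M=\partial_W\mathcal M_{2n+1}$ without ever inverting $\partial_W\mathcal M$.
  \item The characteristic polynomial is a constant-coefficient differential operator applied to $g$: $c(u)=\sum_k b_k\sigma^k\partial_u^k g(u)$ (identity~(\ref{eq:cg})). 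For an even kernel only $b_0,b_2,b_4$ survive, so $c=g-\tfrac12\sigma^2 g''+b_4\sigma^4 g^{(4)}$ with $b_4=\tfrac14-\tfrac{\frkm_4}{24}$.
\end{itemize}
With these in hand, sufficiency becomes the purely real-analytic statement (Proposition~\ref{prop:cgp}): if $g=(u-u_1)^2(u-u_2)^2(u-\tilde u)$, $c_1>0$ and $0<c_2<\tfrac56 c_1^2$, then $g-c_1 s\,g''+c_2 s^2 g^{(4)}$ has five distinct real roots for all $s>0$. The paper proves this by a case split on the ordering of $u_1,u_2,\tilde u$ and an intermediate-value argument, locating four sign-change points $\hat u_1<\cdots<\hat u_4$; the technical core is showing that the ``quadratic-in-$s$ discriminant'' $d(u)=(g'')^2-\beta g^{(4)}g$ stays positive on the relevant intervals, which is done via Taylor expansion and explicit inequalities in $\beta=4c_2/c_1^2\in(0,\tfrac{10}{3})$. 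Your plan does not contain this mechanism, and without the $g$-factorization and the $c$-$g$ identity you would not reach a tractable problem.
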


\begin{theorem}[Dissipativeness] \label{thm:stab}
  Let the kernel $\calk(\xi)$ be an even function and $3\le \frkm_4 < 6$. Then the two-node EQMOM moment system (\ref{eq:eqmomsys}) satisfies the structural stability condition if and only if $3\le \frkm_4<5$.
\end{theorem}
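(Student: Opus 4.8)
The plan is to check the three parts of the structural stability condition in turn; under the standing hypothesis $3\le\frkm_4<6$ the first two come essentially for free, and all the work sits in the third.

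\emph{Conditions (I) and (II).} By Theorem~\ref{thm:hyp2} the two-node system is strictly hyperbolic on $\mathcal M(\Omega^{tot})$, so $A(M)$ has five distinct real eigenvalues; writing $A(M)=R\Lambda R^{-1}$ and setting $A_0=R^{-T}DR^{-1}$ for an arbitrary positive diagonal $D=D(M)$ yields a symmetric positive definite matrix with $A_0A=A^TA_0$, i.e.\ Condition (II) (cf.\ Remark~\ref{rem:sym}); moreover, since the eigenvalues are distinct, \emph{every} admissible symmetrizer is of this form, which will matter for necessity. For Condition (I), I would compute $S_M$ on $\mathcal E$ directly: because $\rho\Delta_0^{eq}=M_0$, $\rho\Delta_1^{eq}=M_1$, $\rho\Delta_2^{eq}=M_2$ hold identically, the first three rows of $S_M$ vanish, and since $\rho\Delta_3^{eq},\rho\Delta_4^{eq}$ depend on $M$ only through $(M_0,M_1,M_2)$, the bottom two rows of $\tau S_M$ have the block form $[\,C\mid -I_2\,]$ with an explicit $C=C(M)\in\mathbb R^{2\times 3}$. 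A short computation then gives $\mathbb R^5=\ker S_M\oplus\operatorname{ran}(S_M)$, so $0$ is a semisimple eigenvalue of $S_M$ of multiplicity $3$ and the remaining two eigenvalues equal $-1/\tau$; taking the columns of $P^{-1}$ to be a basis of $\ker S_M$ followed by $e_4,e_5$ gives $P=\begin{bmatrix}I_3&0\\-C&I_2\end{bmatrix}$ and $PS_MP^{-1}=\tfrac1\tau\operatorname{diag}(0_3,-I_2)$, so (I) holds with $r=2$. Hence the structural stability condition reduces to Condition (III), which must then be shown to hold precisely when $\frkm_4<5$.

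\emph{The equilibrium state.} By the Galilean and scaling invariances of (\ref{eq:1D-BGK}) it suffices to work at $\rho=1$, $U=0$, $\theta=1$. On $\mathcal E$ the moments $M_0,\dots,M_4$ agree with those of the Maxwellian, so in the notation of Remark~\ref{rem:invM} one has $M_3'=0$, $M_4'=3\theta^2$, and $P_2(\sigma)$ factors through $\sigma_1=\sigma^2-\theta$; since $\det H_1(M^*(\sigma))=\rho^2(\theta-\sigma^2)$, the unique root admissible in Theorem~\ref{prop:sigpoly} is the one with $\sigma^2<\theta$, namely $\sigma^2=\sqrt2\,\theta/(\sqrt2+\sqrt{\frkm_4-3})$, and the corresponding $W\in\Omega$ is the symmetric configuration $w_1=w_2=\tfrac12\rho$, $u_1=-d$, $u_2=d$ with $d^2=\theta-\sigma^2$. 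With $W$ known I would evaluate $A(M)$ at this point from $a_j=\partial\mathcal M_{2n+1}/\partial M_j$; because for an even kernel $\mathcal M_5$ involves no moment of $\calk$ beyond $\frkm_4$, only $\frkm_4$ enters the final criterion. This furnishes $A$, its eigen-decomposition $R\Lambda R^{-1}$, and the family of admissible symmetrizers $A_0=R^{-T}DR^{-1}$; the equilibrium symmetry $u_1=-u_2$, $w_1=w_2$ should be exploited to split $R$, $A_0$, $C$ into odd and even parts, which roughly halves the remaining algebra.

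\emph{Condition (III).} Writing $A_0=\begin{bmatrix}A_{11}&A_{12}\\A_{12}^T&A_{22}\end{bmatrix}$ in the $3+2$ splitting and substituting the above $S_M$ and $P$, the inequality $A_0S_M+S_M^TA_0\le -P^T\operatorname{diag}(0_3,I_2)P$ becomes
\[
\frac1\tau\begin{bmatrix}A_{12}C+C^TA_{12}^T & C^TA_{22}-A_{12}\\ A_{22}C-A_{12}^T & -2A_{22}\end{bmatrix}+\begin{bmatrix}C^TC & -C^T\\ -C & I_2\end{bmatrix}\le 0,
\]
a symmetric $5\times5$ matrix inequality whose only free data are the positive entries of $D$ (which fix $A_{11},A_{12},A_{22}$ through $R$) and the parameter $\frkm_4$. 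I would reduce it by a Schur complement against the invertible lower-right block and argue both directions: (a) for $3\le\frkm_4<5$, exhibit a concrete $D$ — which I expect to be essentially forced, up to the obvious scaling freedom — making the reduced inequality hold; (b) for $\frkm_4\ge5$, show that no positive $D$, equivalently no symmetric positive definite symmetrizer $A_0$, can satisfy it, by testing the quadratic form against a fixed vector $z\notin\ker S_M$ for which the constraint $A_0A=A^TA_0$ pins down the relevant entries of $A_0$ tightly enough to force $z^T(\cdots)z>0$.

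The crux is this last step. The algebraic reduction of the $\frkm_4$-dependent $5\times5$ inequality is laborious but routine; the genuine difficulty is the necessity direction, where one must rule out \emph{every} admissible symmetrizer rather than produce one, and this demands a precise description of how $A_0A=A^TA_0$ constrains $A_0$ at the equilibrium before the coupling inequality can be shown to fail. It is exactly the competition between that constraint and the coupling inequality that makes $\frkm_4=5$ the sharp threshold, so the computation has to be organized — via the odd/even block structure of the equilibrium — so that this constant emerges transparently.
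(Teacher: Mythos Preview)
Your overall strategy matches the paper's: dispose of (I) and (II) quickly, then concentrate on (III) at the equilibrium state, parametrizing the admissible symmetrizers as $A_0=L^T\Lambda L$ with $\Lambda=\operatorname{diag}(x_i^2)>0$. Your identification of the equilibrium $W$ and the remark that only $\frkm_4$ enters are both correct.

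Where you diverge is in the handling of (III). You plan to attack the full $5\times5$ matrix inequality via a Schur complement and, for necessity, a test vector. The paper instead makes a decisive simplification you should not skip: conjugating by $P^{-1}$ and writing $\tilde A_0=P^{-T}A_0P^{-1}$, the inequality in (III) becomes
\[
\begin{pmatrix}0_3 & \tau^{-1}\tilde A_{12}\\ \tau^{-1}\tilde A_{12}^T & 2\tau^{-1}\tilde A_{22}-I_2\end{pmatrix}\ge 0,
\]
since $PS_MP^{-1}=\tau^{-1}\operatorname{diag}(0_3,-I_2)$. Because the top-left block is \emph{identically zero}, positive semidefiniteness forces $\tilde A_{12}=0$; the remaining condition $2\tau^{-1}\tilde A_{22}\ge I_2$ is then arranged by scaling. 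Thus (III) collapses to the six linear \emph{equations} expressing that the first three columns of $\sqrt{\Lambda}LP^{-1}$ are orthogonal to the last two. This turns ``find $D$ satisfying an inequality'' into ``find positive $x_i^2$ solving an overdetermined linear system,'' and handles both directions at once. Your Schur-complement route would eventually arrive here too, but only after discovering that the top-left Schur complement must vanish identically---the same observation in disguise.

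From that point the paper uses exactly the symmetry you anticipate: at equilibrium the eigenvalues are $U\pm\mu_1\sigma$, $U\pm\mu_2\sigma$, $U$, and the combinations $Y_j=(x_1^2+(-1)^jx_2^2)\mu_1^j+(x_3^2+(-1)^jx_4^2)\mu_2^j$ reduce the six equations to $Y_1=Y_3=0$ (so $x_1^2=x_2^2$, $x_3^2=x_4^2$) plus two scalar relations among $Y_0,Y_2,Y_4$. The threshold emerges from the single identity $B_2-3\theta^2=3\theta(B_1-6\theta)=-6(\frkm_4-5)$: for $\frkm_4\ge5$ the resulting signs make $Y_0>0$ and $Y_2>0$ incompatible, while for $\frkm_4<5$ one solves a $2\times2$ system explicitly for $x_1^2,x_3^2>0$. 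No test-vector argument is needed---necessity is a direct sign obstruction.

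In short, your plan is not wrong, but the step you call ``laborious but routine'' hides a structural shortcut: block-diagonality of $\tilde A_0$ is \emph{forced}, not merely sufficient, and once you take that shortcut both directions become short linear algebra rather than the semidefinite analysis you outline.
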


In the next section, Theorem \ref{prop:sigpoly} and Corollary \ref{thm:inj} will be proved. Section \ref{sec:nhyp} is devoted to a proof of Theorem \ref{prop:schy2}, while Theorems \ref{thm:hyp2} \& \ref{thm:stab} are proved in Sections \ref{sec:hyp2} and \ref{sec:stab}, respectively.

\section{Injectivity} \label{sec:inj}

In this section, we prove Theorem \ref{prop:sigpoly} and Corollary \ref{thm:inj}.
To start with, we recall the definition of the map $M =\mathcal M(W)$ in (\ref{eq:MWmap}):
\[
  M_j=\sum_{i=1}^nw_i\Delta_j(u_i,\sigma), \quad\textrm{with }
  \Delta_j(u,\sigma) := \int_{\mathbb R} \xi^j \frac{1}{\sigma} \calk \left(\frac{\xi-u}{\sigma} \right) d\xi,
\]
for $j=0,\dots,2n$.

By performing the change of variables $\frac{\xi-u}{\sigma} \mapsto \xi$, we can easily see that
\begin{equation} \label{eq:deltajex}
  \Delta_j(u,\sigma) = \sum_{k=0}^j \binom{j}{k} \frkm_k \sigma^k u^{j-k},
\end{equation}
indicating that $\Delta_j(u, \sigma)$ is a homogeneous bivariate polynomial of $u$ and $\sigma$.
It is not difficult to verify
\begin{subequations}
\begin{align}
  \partial_u \Delta_j(u,\sigma) &= j \Delta_{j-1}(u,\sigma), \label{eq:dudelta} \\
  \partial_{\sigma} \Delta_j(u,\sigma) &= j\sum_{k=0}^{j-1} \binom{j-1}{k} \frkm_{k+1} \sigma^k u^{j-1-k}. \label{eq:dsigdelta}
\end{align}
\end{subequations}

Notice that $M=(M_0,...,M_{2n})^T$ can be conversely expressed in terms of the auxiliary moments $M_j^*=M_j^*(\sigma)$ defined in (\ref{eq:Mstar}) as
\begin{equation} \label{eq:MfromMstar}
  M_j = \sum_{k=0}^j \binom{j}{k} \frkm_k \sigma^k M_{j-k}^* \quad \text{for } j=0, \dots, 2n.
\end{equation}
Indeed, a straightforward calculation of the right-hand side, incorporating (\ref{eq:Mstar}), yields
\[
  \begin{aligned}
    \text{r.h.s.} &= \sum_{k=0}^j \binom{j}{k} \frkm_k \sigma^k \sum_{l=0}^{j-k} b_l \sigma^l \frac{(j-k)!}{(j-k-l)!} M_{j-k-l} \\
    &= \sum_{s=0}^j \frac{j!}{(j-s)!}\sigma^s M_{j-s}\sum_{k=0}^s \frac{\frkm_k}{k!} b_{s-k}.
  \end{aligned}
\]
The second equality is derived after the change of variables $s=k+l$. Rewriting (\ref{eq:bj}) as
\[
  \sum_{k=0}^s \frac{\frkm_k}{k!} b_{s-k} = 0,\quad s\ge 1,
\]
we obtain $\text{r.h.s}=M_j$.
Similarly, with (\ref{eq:deltajex}) involved, a direct calculation of the right-hand side of (\ref{eq:Mstar}) results in
\begin{equation} \label{eq:defmstar}
    M_j^*(\sigma)=\sum_{i=1}^nw_iu_i^j,\quad j=0,...,2n.
\end{equation}

\begin{remark} \label{rem:3.1}
  Together with Remark \ref{rem:invM}, the last formula suggests a practical method to solve $W\in \mathbb R^{2n+1}$ from the EQMOM map $M = \mathcal M(W)$. Once $\sigma$ has been determined as shown in Remark \ref{rem:invM}, the $(w_i,u_i)$'s can be determined by solving the first $2n$ equations in (3.4) with existing algorithms \cite{MarFox2013}.
\end{remark}

About the Hankel matrix, we quote the following lemma.
\begin{lemma}[\cite{gtm277}, Theorem 9.7] \label{lem:hankpos}
  Given $M'=(M_0,\dots,M_{2n-1})\in \mathbb R^{2n}$, if the nonlinear equations
  \[
    \sum_{i=1}^n w_i u_i^j = M_j \quad \text{for } j=0, \dots, 2n-1
  \]
  have a solution in $\Omega'$ defined in (\ref{eq:Omega}), then the Hankel matrix $H_{n-1}(M')$ is positive definite. Conversely, if $H_{n-1}(M')$ is positive definite, then the last equations have a unique solution in $\Omega'$.
\end{lemma}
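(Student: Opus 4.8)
The plan is to recognize this as the classical Gauss-quadrature / orthogonal-polynomial characterization of the truncated Hamburger moment problem and to prove the two implications separately. Throughout I would work with the linear functional $L$ on polynomials defined by $L(\xi^j)=M_j$ for $j=0,\dots,2n-1$, so that $H_{n-1}(M')$ is exactly the Gram matrix of the bilinear form $\langle p,q\rangle:=L(pq)$ on polynomials of degree at most $n-1$, written in the monomial basis $1,\xi,\dots,\xi^{n-1}$.

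For the forward implication (a solution in $\Omega'$ forces $H_{n-1}(M')$ positive definite), suppose $M_j=\sum_{i=1}^n w_iu_i^j$ with $w_i>0$ and distinct $u_i$. For any nonzero $c=(c_0,\dots,c_{n-1})^T$ set $p(\xi)=\sum_{k=0}^{n-1}c_k\xi^k$; a direct expansion then gives
\[
c^T H_{n-1}(M')\,c=\sum_{i=1}^n w_i\,p(u_i)^2\ge 0.
\]
Since $p$ has degree at most $n-1$ it cannot vanish at all $n$ distinct nodes, and because every $w_i>0$ the sum is strictly positive; hence $H_{n-1}(M')$ is positive definite.

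The converse is the substantial direction. Assuming $H_{n-1}(M')$ positive definite, $\langle\cdot,\cdot\rangle$ is a genuine inner product on polynomials of degree $\le n-1$. I would first record the key positivity fact: any $r\ge 0$ of degree $\le 2n-2$ is a sum of squares of polynomials of degree $\le n-1$ (a nonnegative univariate polynomial is a sum of two such squares), whence $L(r)>0$ unless $r\equiv 0$. Next, since $H_{n-1}(M')$ is invertible, the conditions $L(p_n\,\xi^k)=0$ for $k=0,\dots,n-1$ (which involve only the available moments $M_0,\dots,M_{2n-1}$, as $\deg(p_n\xi^k)\le 2n-1$) determine a unique monic $p_n$ of degree $n$ orthogonal to all lower-degree polynomials. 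To see $p_n$ has $n$ distinct real roots, let $u_1<\dots<u_m$ be the points where $p_n$ changes sign and put $q(\xi)=\prod_{j=1}^{m}(\xi-u_j)$; then $p_n q$ does not change sign, and if $m<n$ orthogonality gives $L(p_nq)=0$, while $p_n q$ is a nonzero sign-definite polynomial of degree $n+m\le 2n-1$, contradicting either the positivity fact (when the degree is $\le 2n-2$) or parity. Hence $m=n$, giving simple real roots $u_1<\dots<u_n$, and relabeling yields the ordering in $\Omega'$. Defining Gauss weights $w_i=L(\ell_i)$ through the Lagrange basis $\ell_i$ at these nodes, the division $r=p_n s+t$ with $\deg t\le n-1$ together with orthogonality yields exactness $L(r)=\sum_i w_i r(u_i)$ for all $\deg r\le 2n-1$, which for $r=\xi^j$ reproduces every $M_j$; applying exactness to $r=\ell_i^2$ (degree $2n-2$) gives $w_i=L(\ell_i^2)>0$. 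Uniqueness follows because any admissible representation makes $\prod_i(\xi-u_i)$ monic of degree $n$ and orthogonal to all lower degrees, hence equal to $p_n$, so the nodes are forced to be the roots of $p_n$ and the weights are then pinned down by the nonsingular Vandermonde system in $M_0,\dots,M_{n-1}$.

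The main obstacle is the degree bookkeeping in the converse: we are given positive definiteness only of the $n\times n$ matrix $H_{n-1}(M')$ (moments up to $M_{2n-2}$), yet must reproduce moments up to $M_{2n-1}$ and guarantee simplicity of all $n$ roots. The delicate points are checking that $p_n$ and the quadrature stay well-defined using only $M_0,\dots,M_{2n-1}$ (so every product that appears has degree $\le 2n-1$) and that the strict positivity $L(r)>0$ indeed holds up to degree $2n-2$—exactly the range needed for the root-counting and the weight-positivity arguments to close.
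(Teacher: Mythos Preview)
The paper does not actually prove this lemma; it is stated as a quotation from Schm\"udgen (\cite{gtm277}, Theorem~9.7), so there is no in-paper argument to compare against.  Your outline is the classical Gauss-quadrature / orthogonal-polynomial proof and is correct.

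The only point that deserves a word of clarification is the terse ``or parity'' in the root-counting step.  Since $p_n$ is monic of degree $n$, its behaviour at $\pm\infty$ forces the number $m$ of real sign changes of $p_n$ to satisfy $m\equiv n\pmod 2$.  Hence $m<n$ already implies $m\le n-2$, so $\deg(p_nq)=n+m\le 2n-2$, and the sum-of-squares positivity fact applies directly; the degree-$(2n-1)$ case simply never arises.  With this made explicit, all of the degree bookkeeping you flag as delicate closes cleanly: the orthogonality conditions determining $p_n$ involve only $M_0,\dots,M_{2n-1}$; exactness of the quadrature on polynomials of degree $\le 2n-1$ uses only those same moments; and $w_i=L(\ell_i^{\,2})>0$ follows from positive definiteness of $H_{n-1}$ since $\deg\ell_i^{\,2}=2n-2$.
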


To prove Theorem \ref{prop:sigpoly}, we first notice the following fact:
\begin{proposition} \label{prop:hanksing}
  If $M_j^* = \sum_{i=1}^n w_i u_i^j$ for $j=0,\dots,2n$, the Hankel matrix $H_n(\{M_j^*\})$ is singular.
\end{proposition}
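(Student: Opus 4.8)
The plan is to recognize $H_n(\{M_j^*\})$ as a Gram‑type sum of $n$ rank‑one matrices built from the nodes $u_i$. Introduce the Vandermonde vectors $v_i=(1,u_i,u_i^2,\dots,u_i^n)^T\in\mathbb R^{n+1}$ for $i=1,\dots,n$. Indexing the rows and columns of the $(n+1)\times(n+1)$ matrix $H_n(\{M_j^*\})$ by $p,q\in\{0,\dots,n\}$, its $(p,q)$ entry is $M_{p+q}^*=\sum_{i=1}^n w_i u_i^{p+q}=\sum_{i=1}^n w_i (v_i)_p (v_i)_q$, so that
\[
  H_n(\{M_j^*\})=\sum_{i=1}^n w_i\, v_i v_i^T .
\]
The right‑hand side is a sum of $n$ matrices, each of rank at most one, hence $\operatorname{rank} H_n(\{M_j^*\})\le n<n+1$, and the matrix is singular.

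An equivalent route, which I would actually write out because it yields an explicit kernel vector and makes no use of the $u_i$ being pairwise distinct, is to form the monic polynomial $q(x)=\prod_{i=1}^n(x-u_i)=\sum_{k=0}^n c_k x^k$ with $c_n=1$, and set $c=(c_0,\dots,c_n)^T\neq 0$. Since $v_i^T c=q(u_i)=0$ for every $i$, the identity above gives $H_n(\{M_j^*\})\,c=\sum_{i=1}^n w_i v_i (v_i^T c)=0$, so $c$ is a nonzero element of the kernel and $H_n(\{M_j^*\})$ is singular. (This is of course just the classical fact that a Hankel matrix of a discretely supported measure with at most $n$ atoms is singular at size $n+1$, cf.\ Lemma \ref{lem:hankpos}.)

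There is no real obstacle here; the only point deserving a moment's attention is that the argument must remain valid when some of the $u_i$ coincide or some $w_i$ vanish, which both the rank bound and the explicit construction of $c$ respect. This proposition is exactly the "necessity" half underpinning Theorem \ref{prop:sigpoly}: by \eqref{eq:defmstar}, a solution $W\in\Omega$ of $\mathcal M(W)=M$ produces moments $M^*(\sigma)$ of the discrete form $\sum_i w_i u_i^j$, so the chosen $\sigma$ must be a root of $P_n(\sigma;M)=\det H_n(M^*(\sigma))$, while Lemma \ref{lem:hankpos} simultaneously forces $H_{n-1}(M^*(\sigma))$ to be positive definite.
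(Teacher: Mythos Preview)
Your argument is correct, but it differs from the paper's. The paper expands $\det H_n(\{M_j^*\})$ multilinearly in the columns: writing the $k$th column as $\sum_{i_k=1}^n w_{i_k}(u_{i_k}^k,\dots,u_{i_k}^{n+k})^T$, the determinant becomes a sum over $(n+1)$-tuples $(i_0,\dots,i_n)\in\{1,\dots,n\}^{n+1}$; by pigeonhole two indices coincide in each term, so every summand has two proportional columns and vanishes. Your route instead factors $H_n=\sum_i w_i v_i v_i^T$ and reads off $\operatorname{rank}\le n$, then reinforces this with an explicit null vector coming from the coefficients of $\prod_i(x-u_i)$. Both are short; the paper's expansion is a one-line determinant identity, while your version is more structural and has the bonus of exhibiting the kernel vector explicitly, which is pleasant though not used later in the paper. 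Either way the robustness to repeated $u_i$ or vanishing $w_i$ that you flag is automatic.
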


\begin{proof}
  A direct calculation gives
  \[
  \det
  \begin{bmatrix}
    M_0^* & \cdots & M_n^* \\
    \vdots & & \vdots \\
    M_n^* & \cdots & M_{2n}^*
  \end{bmatrix}
  = \sum_{1\leq i_0,...,i_n\leq n} \det
  \begin{bmatrix}
    w_{i_0} & \cdots & w_{i_n}u_{i_n}^n \\
    \vdots & & \vdots \\
    w_{i_0}u_{i_0}^n & \cdots & w_{i_n}u_{i_n}^{2n}
  \end{bmatrix}.
  \]
  For each determinant in the summation, at least two of the $(n+1)$ indices $1\le i_0,\dots,i_n\le n$ are identical, therefore the determinant is zero. Hence the Hankel matrix is singular.
\end{proof}

\begin{proof}[Proof of Theorem \ref{prop:sigpoly}]
  (i) $\Rightarrow$ (ii). In this case, it has been shown in (\ref{eq:defmstar}) that $M^*_j(\sigma)$ can be expressed as $\sum_{i=1}^n w_i u_i^j$ for $j=0,\dots,2n$ with $w_i> 0$ and all the $u_i$'s distinct.
  Then it follows from Proposition \ref{prop:hanksing} that $P_n(\sigma;M)=\det H_n(\{M_j^*(\sigma)\}) = 0$.
  Because all $w_i>0$ and the $u_i$'s are distinct, we deduce from Lemma \ref{lem:hankpos} that $H_{n-1}(\{M_j^*(\sigma)\})$ is positive definite.

  For the uniqueness, suppose that $P_n(\sigma;M)$ has another root $\sigma_1>0$ such that $H_{n-1}(\{M_j^*(\sigma_1)\})$ is positive definite.
  It follows from Lemma \ref{lem:hankpos} that there exists a $2n$-tuple $\{q_i>0,v_i\}_{1\leq i\leq n}$ such that $v_1<\cdots<v_n$ and $M_j^*(\sigma_1)=\sum_{i=1}^nq_iv_i^j$ for $j=0,...,2n-1$.
  Set $S_j=\sum_{i=1}^n q_i v_i^j$. From $P_n(\sigma_1;M)=0$ and Proposition \ref{prop:hanksing} we see that $\det H_{n}(\{M_j^*(\sigma_1)\}) = P_n(\sigma_1;M) = \det H_{n}(\{S_j\}) = 0$.
  On the other hand, we observe that
  \[
    \det H_{n}(\{M_j^*(\sigma_1)\}) = a M_{2n}^*(\sigma_1) + b, \quad
    \det H_{n}(\{S_j\}) = a S_{2n} + b
  \]
  with
  \[
    a = \det H_{n-1}(\{M_j^*\}) = \det H_{n-1}(\{S_j\})>0
  \]
  and $b$ depending only on $M^*_j(\sigma_1)=S_j$ with $j\le 2n-1$.
  Thus, we have $M^*_{2n}(\sigma_1)=S_{2n}$ and thereby get another solution to the equations $\mathcal{M}(W')=M$, violating the uniqueness in (i). This proves (ii).

  (ii) $\Rightarrow$ (i). Assume that $P_n(\sigma_0;M)=0$ and $H_{n-1}(\{M_j^*(\sigma_0)\})$ is positive definite. The reasoning above shows that there exists a unique $2n$-tuple $\{w_i>0,u_i\}$ such that $u_1<\cdots< u_n$ and $W=(w_i,u_i,\sigma_0)$ solves $\mathcal{M}(W)=M$.
  If $W_1=(q_i,v_i,\sigma_1)\neq W$ is another solution, then $\sigma_1\neq\sigma_0$ and the reasoning in (i) shows that $\sigma_1$ is another root of $P_n(\sigma;M)=0$ which contradicts (ii). This completes the proof.
\end{proof}

Now we are in a position to prove Corollary \ref{thm:inj}.

\begin{proof}[Proof of Corollary \ref{thm:inj}]
  Assume  $\frkm_4 \ge 3 + \frac{9}{8} \frkm_3^2$ for $\calk(\xi)$. It suffices to show that the Jacobian $\frac{\partial \mathcal M}{\partial W}$ is invertible for $W\in \Omega$.
  Recall the explicit expressions of $\Delta_j(u,\sigma)$ in (\ref{eq:deltajex}) and its derivatives in (\ref{eq:dudelta}) \& (\ref{eq:dsigdelta}).
  Using
  \[
    M_j = w_1\Delta_j(u_1,\sigma)+w_2\Delta_j(u_2,\sigma),\quad j=0,1,\dots,4,
  \]
  we compute the $(j+1)$th row of the Jacobian $\frac{\partial \mathcal M}{\partial W}$ as
  \[
    \left(\Delta_j(u_1,\sigma), jw_1\Delta_{j-1}(u_1,\sigma), \Delta_j(u_2,\sigma), jw_2\Delta_{j-1}(u_2,\sigma), \sum_{i=1}^2 w_i \partial_{\sigma} \Delta_j(u_i,\sigma) \right)
  \]
   and, by resorting to MATLAB,
   \[
    \det \frac{\partial \mathcal M}{\partial W} = w_1 w_2 \sigma^3 (u_1-u_2)^4 \left[ w_1 q\left( \frac{u_1-u_2}{\sigma} \right) + w_2 q\left( \frac{u_2-u_1}{\sigma} \right) \right]
   \]
   with $q(x)= x^2 + 3 \frkm_3 x + 2(\frkm_4 - 3)$.
   Obviously, we have $q(x)\ge 0$ for any real $x$ due to $\frkm_4 \ge 3 + \frac{9}{8} \frkm_3^2$, and $q(x)$ does not have two distinct zeros. Therefore, we have $\det \frac{\partial \mathcal M}{\partial W} >0$ for $W \in \Omega$.

   Conversely, we assume $\frkm_4 < 3 + \frac{9}{8} \frkm_3^2$ for $\calk(\xi)$ and show that $\mathcal M$ is not injective on $\Omega$.
   To do this, we set $M_b=\mathcal{M}(\frac{1}{2},0,\frac{1}{2},0,1)=\{\frkm_j\}_{j=0}^4$ and compute
   \begin{align*}
    &M_0^*=1,\quad M_1^*=0,\quad M_2^*(\sigma)=1-\sigma^2,\\
    &M_3^*(\sigma)=\frkm_3(1-\sigma^3),\quad M_4^*(\sigma)=\frkm_4(1-\sigma^4)-6\sigma^2(1-\sigma^2)
   \end{align*}
   and
   \[
   \begin{aligned}
    \frac{P_2(\sigma;M_b)}{(1-\sigma)^2} =&
    (\frkm_4-\frkm_3^2-5)\sigma^4 + 2(\frkm_4-\frkm_3^2-5)\sigma^3 + (2\frkm_4-3\frkm_3^2-6)\sigma^2 \\
    &+ 2(\frkm_4-\frkm_3^2-1)\sigma+(\frkm_4-\frkm_3^2-1).
   \end{aligned}
   \]
   The corresponding $2\times2$ Hankel matrix, which we rewrite as $H_1(\sigma;M_b)$, is positive definite if and only if the polynomial
   \[
    \det H_1(\sigma;M_b) = M^*_0(\sigma)M^*_2(\sigma)-M^{*2}_1(\sigma)=1-\sigma^2>0,
   \]
   namely, $0<\sigma<1$. On the other hand, we denote $\tilde{P}_2(\sigma;M)=\frac{P_2(\sigma;M)}{(1-\sigma)^2}$ and notice
   \begin{displaymath}
   \tilde{P}_2(1;M_b) = 8\frkm_4 - 9\frkm_3^2 - 24<0\quad\text{and}\quad\tilde{P}_2(0;M_b) = \frkm_4 - \frkm_3^2 -1 = \det H_2(M_b)>0
   \end{displaymath}
   where the second inequality follows from the positivity of $H_2(M_b)$.
   By continuity, we may choose $\epsilon>0$ and $M\in\mathcal{M}(\Omega'\times\{\sigma=1\})$ such that
   \begin{displaymath}
    P_2(\epsilon;M)>0>P_2(1-\epsilon;M)\quad\text{and}\quad\det H_1(\sigma;M)>0
   \end{displaymath}
   for $\sigma\in[\epsilon,1-\epsilon]$. Therefore, $P_2(\sigma;M)$ has a root $\sigma_0$ in $(\epsilon,1-\epsilon)$. Clearly, $\sigma=1$ is another root of $P_2(\sigma;M)$ such that $H_1(1;M)>0$. By Theorem \ref{prop:sigpoly}, $\mathcal{M}$ is not injective and hence the proof is complete.
\end{proof}

\section{Hyperbolicity} \label{sec:hyp}
In this section we prove Theorems \ref{prop:schy2} \& \ref{thm:hyp2}.
To this end, it suffices to show that the characteristic polynomial
\begin{equation}
  c=c(u;M)= u^{2n+1} -\sum_{j=0}^{2n}a_j u^j
\end{equation}
of the coefficient matrix $A(M)$ in (\ref{eq:defA}) has distinct real roots.

\subsection{A proof of Theorem \ref{prop:schy2}} \label{sec:nhyp}

First of all, we follow \cite{Huang2020} and introduce an auxiliary polynomial associated with the characteristic polynomial:
\begin{equation} \label{eq:gdef}
  g=g(u;W) = \Delta_{2n+1}(u,\sigma)-\sum_{j=0}^{2n}a_j \Delta_j(u,\sigma).
\end{equation}
With $g$ thus defined, we claim that
\begin{equation} \label{eq:cg}
  c(u;M)=\sum_{k=0}^{2n+1}b_k\sigma^k \partial_u^k g(u;W).
\end{equation}
Indeed, we recall \ref{eq:Mstar} \& \ref{eq:defmstar} that
\[
 \sum_{i=1}^nw_iu_i^j=M_j^* = \sum_{k=0}^j b_k \sigma^k \frac{j!}{(j-k)!}\sum_{i=1}^nw_i\Delta_{j-k}(u_i,\sigma).
\]
In this identity, we set $w_1=1,w_2=\cdots =w_n=0$ and derive from (\ref{eq:dudelta}) that
\[
 u_1^j=\sum_{k=0}^j b_k \sigma^k \frac{j!}{(j-k)!}\Delta_{j-k}(u_1,\sigma)=\sum_{k=0}^j b_k \sigma^k\partial_u^k\Delta_j(u_1,\sigma).
\]
Thus the claim becomes clear.

Furthermore, $g(u;W)$ has the following elegant property for general kernel $\calk(\xi)$.

\begin{proposition} \label{prop:gform}
  \[
    g(u;W) = (u-u_1)^2 \cdots (u-u_n)^2 (u-\tilde{u}(W)).
  \]
\end{proposition}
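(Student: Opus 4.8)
The plan is to analyze $g(u;W)$ directly as a polynomial in $u$, identifying its roots and their multiplicities. First I would observe from (\ref{eq:deltajex}) that each $\Delta_j(u,\sigma)$ is a monic polynomial of degree $j$ in $u$ (the leading term is $u^j$ since $\frkm_0=1$), so $g(u;W)=\Delta_{2n+1}(u,\sigma)-\sum_{j=0}^{2n}a_j\Delta_j(u,\sigma)$ is monic of degree $2n+1$. It therefore suffices to show that each $u_i$ is a root of multiplicity at least two, since that accounts for $2n$ of the $2n+1$ roots, forcing the remaining factor to be linear, $(u-\tilde u(W))$ for some $\tilde u(W)\in\mathbb{R}$ depending on $W$.

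The key step is to exploit the defining relations of the closure coefficients $a_j$. Recall that $a_j=\partial\mathcal M_{2n+1}/\partial M_j$, and the $M_j$ for $j=0,\dots,2n$ together with $M_{2n+1}$ are all of the form $\sum_i w_i\Delta_j(u_i,\sigma)$. Differentiating the identity $M_{2n+1}=\mathcal M_{2n+1}(M_0,\dots,M_{2n})$ with respect to $w_i$ and with respect to $u_i$ (holding the other components of $W$ fixed, and using the chain rule together with $\partial_u\Delta_j=j\Delta_{j-1}$ from (\ref{eq:dudelta})) yields, for each $i=1,\dots,n$,
\begin{align}
  \Delta_{2n+1}(u_i,\sigma) &= \sum_{j=0}^{2n} a_j \Delta_j(u_i,\sigma), \label{eq:gvan0}\\
  \partial_u\Delta_{2n+1}(u_i,\sigma) &= \sum_{j=0}^{2n} a_j\, \partial_u\Delta_j(u_i,\sigma). \label{eq:gvan1}
\end{align}
The first relation says exactly $g(u_i;W)=0$, and the second says $\partial_u g(u_i;W)=0$. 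Hence each $u_i$ is (at least) a double root of $g$. Since the $u_i$ are distinct on $\Omega$, we get the factor $\prod_{i=1}^n(u-u_i)^2$, and comparing degrees and leading coefficients gives the claimed form with $\tilde u(W)$ real.

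The main obstacle is justifying (\ref{eq:gvan0})--(\ref{eq:gvan1}) rigorously, i.e. showing that the chain-rule manipulation is legitimate. This requires that the Jacobian $\partial\mathcal M/\partial W$ be invertible on $\Omega$ — which is precisely the standing hypothesis that $\mathcal M$ is injective there (with the implicit function theorem supplying differentiability of $\mathcal M_{2n+1}$) — so that $M_{2n+1}$ is genuinely a differentiable function of $(M_0,\dots,M_{2n})$ along the image of $\Omega$, and so that varying $w_i$ or $u_i$ produces an admissible variation of $(M_0,\dots,M_{2n})$. Once this is in place, differentiating $\sum_i w_i\Delta_{2n+1}(u_i,\sigma)=\mathcal M_{2n+1}\bigl(\{\sum_i w_i\Delta_j(u_i,\sigma)\}_{j=0}^{2n}\bigr)$ and reading off the coefficient of the independent variation $\partial_{w_i}$ (respectively $\partial_{u_i}$) gives (\ref{eq:gvan0}) (respectively, after dividing by $w_i>0$, gives (\ref{eq:gvan1})). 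I would also note as a sanity check that this is the exact analogue of the Gaussian-kernel computation in \cite{Huang2020}, so no new kernel-specific input is needed beyond (\ref{eq:deltajex}) and (\ref{eq:dudelta}).
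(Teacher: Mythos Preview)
Your proposal is correct and follows essentially the same approach as the paper: both derive the relations $g(u_i;W)=\partial_u g(u_i;W)=0$ by applying the chain rule to $\mathcal M_{2n+1}\circ\mathcal M$ (the paper packages this as the matrix identity $(a_0,\dots,a_{2n})\,\partial\mathcal M/\partial W=\partial\mathcal M_{2n+1}/\partial W$ and reads off the $w_i$- and $u_i$-columns), and then conclude the factorization by counting roots. Your added remarks that $g$ is monic of degree $2n+1$ and that the chain rule requires the standing invertibility hypothesis on $\partial\mathcal M/\partial W$ make the argument slightly more explicit than the paper's, but the substance is identical.
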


\begin{proof}
  With the expressions for $M_j$ in (\ref{eq:MWmap}) and $\Delta_j(u,\sigma)$ in (\ref{eq:deltajex}) ($j=0,\dots,2n$), we calculate the $(j+1)$th-row of the Jacobian matrix $\frac{\partial \mathcal M}{\partial W} \in \mathbb R^{(2n+1)\times (2n+1)}$ as
  \[
    \left( \Delta_j(u_1,\sigma), w_1 \partial_{u}\Delta_j(u_1,\sigma),\dots,\Delta_j(u_n), w_n\partial_u\Delta_j(u_n,\sigma), \partial_{\sigma}M_j \right).
  \]
  Moreover, from (\ref{eq:mlast}) we compute $\frac{\partial \mathcal M_{2n+1}}{\partial W}$ as
  \[
    \left( \Delta_{2n+1}(u_1,\sigma),w_1\partial_u\Delta_{2n+1}(u_1,\sigma),\dots,\Delta_{2n+1}(u_n,\sigma),w_n\partial_u\Delta_{2n+1}(u_n,\sigma),\partial_{\sigma}M_{2n+1} \right).
  \]
  Then we use the relation
  \begin{equation} \label{eq:aj}
    (a_0,\dots,a_{2n})\frac{\partial \mathcal M}{\partial W}
    = \frac{\partial \mathcal M_{2n+1}}{\partial M} \frac{\partial \mathcal M}{\partial W}
    = \frac{\partial \mathcal M_{2n+1}}{\partial W}
  \end{equation}
  to obtain, for $i=1,\dots,n$,
  \begin{subequations}
  \begin{align}
    a_0 \Delta_0(u_i,\sigma) + \cdots + a_{2n} \Delta_{2n}(u_i,\sigma) &= \Delta_{2n+1}(u_i,\sigma), \label{eq:sub1} \\
    a_0 \partial_u\Delta_0(u_i,\sigma) + \cdots + a_{2n} \partial_u\Delta_{2n}(u_i,\sigma) &= \partial_u\Delta_{2n+1}(u_i,\sigma), \label{eq:sub2}
  \end{align}
  \end{subequations}
  implying $g(u_i)=\partial_u g(u_i)=0$ for $i=1,\dots,n$.
  This immediately leads to the expected expression.
\end{proof}

We also need the following elementary fact.
\begin{proposition} \label{prop:pfdf}
  If a polynomial $f(u)$ of degree $N$ has $N$ real roots  (counting multiplicity) and the maximum multiplicity of the roots is $m(f)$, then
  \[
    g_s(u)=f(u)+sf'(u)
  \]
  also has $N$ real roots (counting multiplicity) and $m(g_s)=\max\{m(f)-1,1\}$ if $s\ne 0$.
\end{proposition}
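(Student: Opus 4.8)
The plan is to factor $f$ over $\mathbb R$ and work with its logarithmic derivative. Write $f(u)=c\prod_{i=1}^k(u-r_i)^{m_i}$ with $c\neq 0$, distinct reals $r_1<\dots<r_k$, positive integers $m_i$ summing to $N$, and $m(f)=\max_i m_i$. Since $\deg f'=N-1<N$ and $s\neq 0$, the polynomial $g_s=f+sf'$ again has degree $N$ with the same leading coefficient $c$. Using $f'=f\psi$ with $\psi(u):=f'(u)/f(u)=\sum_{i=1}^k m_i/(u-r_i)$, one obtains the polynomial identity $g_s=c\prod_{i=1}^k(u-r_i)^{m_i-1}\,Q_s$, where $Q_s(u):=\prod_{i=1}^k(u-r_i)+s\sum_{i=1}^k m_i\prod_{j\neq i}(u-r_j)$ is a monic polynomial of degree $k$ (indeed $\prod_i(u-r_i)(1+s\psi(u))=Q_s(u)$ away from the $r_i$). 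Thus the roots of $g_s$ are exactly the roots of $Q_s$ together with each $r_i$ carried with multiplicity $m_i-1$, and the whole question reduces to analyzing $Q_s$.

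The second step is to show that $Q_s$ has exactly $k$ real roots, all simple, none equal to any $r_i$. First, $Q_s(r_i)=s\,m_i\prod_{j\neq i}(r_i-r_j)\neq 0$, so no $r_i$ is a root; hence the roots of $Q_s$ are precisely the solutions of $\psi(u)=-1/s$. On each bounded interval $(r_i,r_{i+1})$ the function $\psi$ is strictly decreasing, because $\psi'(u)=-\sum_i m_i/(u-r_i)^2<0$, and runs from $+\infty$ to $-\infty$, so it meets the level $-1/s$ exactly once, giving $k-1$ roots. On $(-\infty,r_1)$, $\psi$ decreases from $0^-$ to $-\infty$; on $(r_k,+\infty)$, it decreases from $+\infty$ to $0^+$; so exactly one of these two unbounded intervals contains a solution of $\psi(u)=-1/s$, depending on the sign of $s$. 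Altogether $Q_s$ has $k$ real roots, one in each of $k$ distinct intervals, and since $\deg Q_s=k$ these exhaust all roots of $Q_s$. Simplicity follows directly: at such a root $a$ one has $\prod_i(a-r_i)\neq 0$, and differentiating $Q_s(u)=\prod_i(u-r_i)(1+s\psi(u))$ gives $Q_s'(a)=\big(\prod_i(a-r_i)\big)s\,\psi'(a)\neq 0$.

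Combining the two steps, $g_s=c\prod_i(u-r_i)^{m_i-1}Q_s$ has $\sum_i(m_i-1)+k=N$ real roots counted with multiplicity, which is the first assertion. For the multiplicity, the $k$ roots of $Q_s$ are simple and lie outside $\{r_1,\dots,r_k\}$, so the only possibly repeated roots of $g_s$ are those $r_i$ with $m_i\geq 2$, each with multiplicity exactly $m_i-1$; since $k\geq 1$ there is always at least one simple root contributed by $Q_s$. Hence $m(g_s)=\max\{\max_i m_i-1,\,1\}=\max\{m(f)-1,\,1\}$.

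The algebraic identity $g_s=c\prod_i(u-r_i)^{m_i-1}Q_s$ and the monotonicity computation for $\psi$ are routine. I expect the only points requiring genuine care to be the boundary-interval bookkeeping — checking that precisely one of the two unbounded intervals supplies a root, and that this together with the $k-1$ interior roots already accounts for all of $\deg Q_s$, so that no interval can hide a double root — and verifying that the argument remains valid in the degenerate cases $k=1$ (a single pole of $\psi$) and $m(f)=1$ (where the factor $\prod_i(u-r_i)^{m_i-1}$ is trivial and $g_s=cQ_s$ has $N$ distinct simple roots).
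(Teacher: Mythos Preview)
Your proof is correct and complete; the degenerate cases $k=1$ and $m(f)=1$ that you flag at the end are already handled by your main argument without modification.

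Your route differs from the paper's in how the $k$ simple roots of the cofactor $Q_s$ are located. The paper also factors out $\prod_i(u-r_i)^{m_i-1}$, but then finds the remaining roots by introducing the critical points $u_i'$ of $f$ lying between consecutive $r_i$'s, observing that $g_s(u_i')=f(u_i')$, and running an intermediate-value-theorem sign argument on each interval $(u_{i-1}',u_i')$, with a case split on the parity of $m_i$. Your approach via the logarithmic derivative $\psi=\sum_i m_i/(u-r_i)$ is cleaner: the strict monotonicity $\psi'<0$ on each pole-free interval and the asymptotics at the poles and at infinity immediately give exactly one solution of $\psi=-1/s$ per interval, with no parity bookkeeping. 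The paper's argument gives slightly sharper localization of the new roots (each $v_i$ lies in the interval $(u_{i-1}',u_i')$ bounded by critical points of $f$), but this extra information is not used downstream, so your more economical argument suffices.
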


\begin{proof}
  According to the condition, we can write $f(u) = C(u-u_1)^{m_1}\cdots (u-u_r)^{m_r}$ with $\sum_{i=1}^r m_i = N$.
  Denote by $u_i' \in (u_i,u_{i+1})$ the root of $f'(u)$ for $i=1,\dots,r-1$. Set $u_0'=-\infty$ and $u_r'=\infty$.
  It suffices to show that, if $s\ne0$,
  \[
    g_s(u)= C \prod_{i=1}^r (u-u_i)^{m_i-1} \cdot \prod_{i=1}^r (u-v_i)
  \]
  with $v_i \in (u_{i-1}',u_i')$ and $v_i \ne u_i$ for $i=1,\dots,r$.

  For this purpose, we first notice that $u_i$ is a root of $g_s(u)$ with multiplicity $(m_i-1)$.
  Then we look into the interval $(u_{i-1}',u_i')$ which contains $u_i$ for $i=1,\dots,r$. Note $g_s(u_i')=f(u_i')$ for $i=0,...,r$ (this equality holds for $i=0,r$ where $f(u_i')= \pm\infty$).
  If $m_i$ is even, we have $f(u_{i-1}')f(u_i')>0$ while $g_s(u_i-\epsilon)g_s(u_i+\epsilon)<0$ because the multiplicity of $u_i$ is odd ($=m_i-1$) for $g_s(u)$. Therefore, there exists one root of $g_s(u)$, denoted by $v_i$, in either $(u_{i-1}',u_i-\epsilon)$ or $(u_i+\epsilon,u_i')$. It is hence distinct from $u_i$.
  Similarly, such a $v_i$ also exists for odd $m_i$.
  In this way, we get $r$ additional roots $v_i$ ($i=1,\dots,r$) of $g_s(u)$.
  These roots are all simple because the degree of $g_s(u)$ is $\sum_{i=1}^r (m_i-1) + r = N$.
  Hence, $g_s(u)$ can be factorized as above.
\end{proof}

Now we are in a position to prove Theorem \ref{prop:schy2}.
\begin{proof}[Proof of Theorem \ref{prop:schy2}]
  Referring to the condition of Theorem \ref{prop:schy2}, we can write the $b$-polynomial as $p(t)=(t+d_1)\cdots (t+d_{2n+1})$ with at least two of the $d_i$'s being nonzero. It is straightforward to verify that
  \[
    \sum_{j=0}^{2n+1} b_j t^j = t^{2n+1}p\left(\frac{1}{t}\right) = (1+d_1t)\cdots (1+d_{2n+1}t).
  \]
  Substituting $t$ with $\sigma \partial_u$, we obtain
  \[
    \sum_{j=0}^{2n+1}b_j \sigma^j \partial_u^j g(u;W) = (1+d_1\sigma \partial_u)\cdots (1+d_{2n+1}\sigma \partial_u) g(u;W),
  \]
  which is the characteristic polynomial $c(u;M)$ according to (\ref{eq:cg}).

  As shown in Proposition \ref{prop:gform}, $g(u;W)$ has $(2n+1)$ real roots (of $u$) and the maximum multiplicity $m(g)\le 3$ if $W\in \Omega$.
  Then by repeatedly using Proposition \ref{prop:pfdf} with $s=\sigma d_i$ ($i=2n+1,2n,\dots,1$), we see that the characteristic polynomial $c(u;M)$ has $(2n+1)$ real roots.
  Moreover, since at least two of the $d_i$'s are nonzero, the maximum multiplicity of each root is reduced to 1. Hence, all the roots are distinct.
\end{proof}

\subsection{A proof of Theorem \ref{thm:hyp2}} \label{sec:hyp2}

First of all, we recall (\ref{eq:cg}) for $n=2$ that the characteristic polynomial is
\[
  c(u;M)=\sum_{k=0}^5 b_k \sigma^k \partial_u^k g(u;W)
\]
and Proposition \ref{prop:gform} reads as
\[
  g(u;W)=(u-u_1)^2(u-u_2)^2(u-\tilde u(W)).
\]
Moreover, for even kernels we have $\frkm_1=\frkm_3=\frkm_5=0$. Then it follows from the definition (\ref{eq:bj}) that
\begin{equation} \label{eq:bj2}
  b_2 = -\frac{1}{2}, \quad b_4 = \frac{1}{4} - \frac{1}{24}\frkm_4, \quad b_1=b_3=b_5=0.
\end{equation}
Notice that $\tilde{u}=a_{2n}-2\sum_{i=1}^nu_i$. We can obtain $a_4$ by solving (\ref{eq:aj}) via Crammer's rule and thereby
\begin{equation} \label{eq:ut2}
  \tilde u(W) = \frac{w_1u_1+w_2u_2}{w_1+w_2} + \frac{4(\frkm_4-3)\sigma^2(w_1-w_2)(u_1-u_2)}{(w_1+w_2)[(u_1-u_2)^2+2(\frkm_4-3)\sigma^2]}.
\end{equation}
Clearly, $\tilde{u}(W)$ and thereby $g(u;W)$ are well-defined for $u_1=u_2$.

With these preparations, we turn to

\textit{The proof of Theorem \ref{thm:hyp2}. }
  By Corollary \ref{thm:inj} and Remark \ref{rem:Mtot}, the condition $\frkm_4 \ge 3$ ensures that the EQMOM is well defined. Assume that the two-node EQMOM is strictly hyperbolic on $\mathcal M(\Omega^{tot})$. Taking $W=(\frac{1}{2},0,\frac{1}{2},0,1)\in \Omega^{eq}$, we have $g(u;W)=u^5$ and $c(u;W)=u(u^4 -10u^2 + 120b_4)$.
  The strict hyperbolicity that $c(u;W)$ has 5 distinct real roots implies $b_4>0$ and hence $\frkm_4<6$ due to (\ref{eq:bj2}).

  Conversely, if $3\le \frkm_4<6$, we see from (\ref{eq:bj2}) that $b_2=-\frac{1}{2}<0<b_4\le \frac{1}{8} < \frac{5}{6}b_2^2 $.
  Thus, $c(u;W)=g(u;W)+b_2\sigma^2 \partial_u^2 g(u;W) + b_4\sigma^4 \partial_u^4 g(u;W)$ has 5 distinct real roots due to Proposition \ref{prop:cgp} to be proved below.

\begin{proposition}\label{prop:cgp}
  Set $g(u)=g(u;W)$. For any $s>0$, the polynomial
  \[
    h(u;s) = g(u) - c_1sg^{(2)}(u) + c_2s^2g^{(4)}(u)
  \]
  has 5 distinct roots if the constants satisfy $c_1 > 0$ and $0 < c_2 < \frac{5}{6} c_1^2$.
\end{proposition}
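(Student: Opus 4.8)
The plan is to reduce everything to an analysis of the polynomial $h(u;s)$ on the family $g(u) = (u-u_1)^2(u-u_2)^2(u-\tilde u)$, exploiting the special structure that $g$ has a double root at $u_1$, a double root at $u_2$, and a simple root at $\tilde u$. Since $s>0$ is fixed and $g$ depends on $W\in\Omega^{tot}$, after a translation we may normalize one of the nodes to the origin; and by rescaling $u\mapsto \lambda u$ (which rescales $s$ by $\lambda^2$) we can further normalize, so that effectively $h(u;s)$ depends on two real parameters — the gap between the two nodes and the position of $\tilde u$ — plus the scale $s$. The goal is to show $h(\cdot;s)$ always has five distinct real roots.

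The natural tool is the same interlacing/sign-change argument already used in Proposition~\ref{prop:pfdf}, but now applied to a genuinely second-order perturbation rather than a first-order one. I would first handle the degenerate case $u_1=u_2$ separately: then $g(u)=(u-u_1)^4(u-\tilde u)$, and $h(u;s)$ can be written out explicitly as a quintic in $(u-u_1)$ with coefficients depending on $(\tilde u - u_1)$ and $s$; here the constraint $0<c_2<\tfrac56 c_1^2$ is exactly what is needed (cf.\ the computation $b_4 \le \tfrac18 < \tfrac56 b_2^2$ in the proof of Theorem~\ref{thm:hyp2} when $g(u)=u^5$), and one checks the discriminant or simply counts sign changes on a suitable subdivision of $\mathbb R$. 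For the generic case $u_1\ne u_2$, order the critical points of $g$: between $u_1$ and $u_2$ lies a root $c$ of $g'$, and $\tilde u$ lies outside $[u_1,u_2]$ (or, when it lies between, $g$ has a third critical point there). I would evaluate $h$ at $u_1$, $u_2$, $\tilde u$, and at the critical points of $g$, using $h = g - c_1 s g'' + c_2 s^2 g^{(4)}$ together with $g(u_i)=g'(u_i)=0$, so that $h(u_i) = -c_1 s g''(u_i) + c_2 s^2 g^{(4)}(u_i)$; since $g$ has a local min or max of even order $2$ at each $u_i$, $g''(u_i)$ has a definite sign, and for $s$ small the $g''$ term dominates, giving $h(u_1)$ and $h(u_2)$ signs opposite to $g''(u_i)$. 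Combined with the behaviour $h(u)\to -\infty$ as $u\to-\infty$ and $h(u)\to+\infty$ as $u\to+\infty$ (leading coefficient $1$, odd degree) and the sign of $h$ at $\tilde u$ and at one point far to each side, this pins down five sign changes and hence five distinct real roots.

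The subtle point — and the main obstacle — is that the desired conclusion must hold for \emph{all} $s>0$, not just small $s$, whereas the ``$g''$ term dominates'' heuristic is only clearly valid in a perturbative regime. To close this I would exploit scaling: the five-distinct-roots property of $h(u;s)$ is invariant under $(u,u_1,u_2,\tilde u, s)\mapsto(\lambda u, \lambda u_1,\lambda u_2,\lambda\tilde u, \lambda^2 s)$, so the problem on the ray $s\in(0,\infty)$ with fixed node configuration is equivalent to the problem at $s=1$ with the node configuration shrunk by $\lambda=s^{-1/2}$; thus it suffices to prove the statement either for all configurations at $s=1$, or equivalently in the small-gap limit. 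I would therefore fix $s=1$ and treat the node gap $\delta = u_2-u_1$ and the parameter $\tilde u$ as the variables, and argue by a compactness/continuity argument: the set of $(\delta,\tilde u)$ for which $h$ fails to have five distinct real roots is closed, and one shows it is empty by (a) verifying the small-$\delta$ regime via the perturbative sign count above, (b) verifying the large-$\delta$ regime where the two double roots are far apart and $h$ is, near each, a small perturbation of $(u-u_i)^2\cdot(\text{const})$ so that locally it picks up two real roots near each $u_i$ plus one in between, and (c) ruling out a transition — a double real root of $h$ colliding — in the intermediate range by showing that whenever $h$ has a repeated real root the discriminant condition $c_2<\tfrac56 c_1^2$ is violated, which is precisely the content one expects from the boundary case. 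In practice the cleanest execution may be to expand $\operatorname{disc}(h)$ as a polynomial in the parameters and show it is strictly positive on $c_1>0$, $0<c_2<\tfrac56c_1^2$; MATLAB (as already invoked for $\det\frac{\partial\mathcal M}{\partial W}$ in Corollary~\ref{thm:inj}) can furnish the explicit discriminant, after which the positivity reduces to an elementary inequality in the gap and $\tilde u$.
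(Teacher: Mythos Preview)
Your proposal has a genuine gap. The sign-change argument at $u_1,u_2,\tilde u$ does not control $h$ for all $s>0$: e.g.\ in the configuration $u_1<\tilde u<u_2$ one computes $g''(u_1)=2(u_1-u_2)^2(u_1-\tilde u)<0$ and $g^{(4)}(u_1)=24(3u_1-2u_2-\tilde u)<0$, so $h(u_1;s)=-c_1sg''(u_1)+c_2s^2g^{(4)}(u_1)$ changes sign as $s$ grows. Thus the test points $u_1,u_2$ cannot serve as $\hat u_i$ uniformly in $s$. Your scaling reduction to $s=1$ is correct but merely reparametrizes the difficulty: you now need the result for \emph{every} configuration $(u_1,u_2,\tilde u)$, and that parameter space is non-compact (in particular $\tilde u\in\mathbb R$ is unbounded), so a ``compactness/continuity'' argument is not available as stated. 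Step~(c) --- ``whenever $h$ has a repeated real root the condition $c_2<\tfrac56 c_1^2$ is violated'' --- is exactly the nonvanishing of $\operatorname{disc}(h)$ that you are trying to prove, so invoking it is circular unless you actually execute the discriminant computation you mention at the end; and that computation (a polynomial in three residual parameters after all normalizations) is not carried out.

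The paper's proof rests on an idea absent from your plan: for each \emph{fixed} $u$, regard $h(u;s)$ as a \emph{quadratic in $s$}, with leading coefficient $c_2g^{(4)}(u)$, axis $A(u)=\tfrac{c_1g''(u)}{2c_2g^{(4)}(u)}$, and discriminant $d(u)=g''(u)^2-\beta\,g(u)g^{(4)}(u)$ where $\beta=4c_2/c_1^2\in(0,\tfrac{10}{3})$. The sign-definite test points are not $u_1,u_2$ but certain roots of $g''$ (where $h$ reduces to $g+c_2s^2g^{(4)}$ with both terms of one sign); for the remaining two test points one shows that $\bigcup_{u\in I}\{s>0:h(u;s)>0\}=(0,\infty)$ on a suitable semi-infinite interval $I$ by proving $d(u)>0$ there and letting $A(u)\to\infty$ at the endpoint. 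The positivity of $d(u)$ is established by Taylor-expanding at $u_1$ (or $u_2$, or $\tilde u$) and checking signs of $d^{(k)}$ case by case. This device handles all $s>0$ simultaneously without any compactness or global discriminant computation, and it is what you are missing.
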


\begin{proof}
  Without loss of generality, we assume $u_1\leq u_2$ and consider four cases: (I) $u_1=u_2=\tilde u$, (II) $u_1<\tilde{u}< u_2$, (III) $u_1<u_2\leq\tilde{u}$ and (IV) $\tilde{u}\leq u_1<u_2$.

  \textbf{Case I}. In this case, we have $g(u) = (u-u_1)^5$ and
  \[
    h(u;s)=(u-u_1)^5-20c_1s(u-u_1)^3+120c_2s^2(u-u_1)=(u-u_1)(t^2-20c_1st+120c_2s^2)
  \]
  with $t=(u-u_1)^2$. Thanks to $c_1 > 0$ and $0 < c_2 < \frac{5}{6} c_1^2$, the quadratic function in $t$ has two distinct positive roots. Therefore $h(u;s)$ has 5 distinct real roots.

  For other cases, notice that $h(u;s)$ is a monic polynomial of order 5. Thus, it suffices to find $\hat u_1<\dots<\hat u_4$ such that
  \[
    h(\hat u_1;s)>0\quad\text{and}\quad h(\hat u_i;s)h(\hat u_{i+1};s)<0,\quad i=1,2,3.
  \]
  For this purpose, we set $P_u^+:=\{s>0|h(u;s)>0\}$ and $P_u^-:=\{s>0|h(u;s)<0\}$ for each $u$. Then our main tast is to show
  \begin{equation}\label{eq:Pu}
    \bigcup\limits_{u\in I} P_u^+ = (0,\infty)=\bigcup\limits_{u\in I'} P_u^-
  \end{equation}
  for some intervals $I,I'$. To do this, the key idea is to view $h(u;s)$ as a quadratic function of $s$ and check its symmetric axis and discriminant
  \[
    A(u) = \frac{c_1 g^{(2)}(u)}{2c_2 g^{(4)}(u)},\quad d(u)=g^{(2)}(u)^2-\beta g^{(4)}(u)g(u)
  \]
  with $\beta=4c_2/c_1^2\in(0,\frac{10}{3})$. If the leading coefficient $c_2g^{(4)}(u)$ is negative (or positive) and the discriminant $d(u)$ is positive in $I$ (resp. $I'$), then $P_u^+$ (resp. $P_u^-$) is an open interval centered at $s=A(u)$.

  In addition, we recall the form of $g(u)$ given in Proposition \ref{prop:gform} and denote by $u^{(j)}_1\leq\dots\leq u^{(j)}_{5-j}$ the $(5-j)$ roots of $g^{(j)}(u)$ (counting multiplicity). It is not difficult to see that $u^{(j)}_i \leq u^{(j+1)}_i \leq u^{(j)}_{i+1}$ for $j=1,2,3$ and $i=1,\dots,4-j$.
  The equalities occur only when $j=1$ and $\tilde{u}=u_1$ or $u_2$.

  \textbf{Case II}.
  In this case, we have $u_1=u_1^{(1)} < u^{(2)}_1 < u^{(1)}_2<\tilde u$ and $u^{(2)}_1<u^{(3)}_1<u^{(4)}_1$, resulting in $g(u^{(2)}_1)<0$ and $g^{(4)}(u^{(2)}_1)<0$. Thus, we have $h(u^{(2)}_1;s)<0$ for any $s>0$. A similar argument shows $h(u^{(2)}_3;s)>0$.
  As a consequence, it suffices to prove that there exist $\hat u_1\le u_1$ and $\hat u_4 \ge u_2$ such that $h(\hat u_1;s)>0$ and $h(\hat u_4;s)<0$; in other words, we need to verify (\ref{eq:Pu}) for $I=(-\infty,u_1]$ and $I'=[u_2,\infty)$.

  To this end, we notice $A(u) \to +\infty$ as $u \to -\infty$ and $P_{u_1}^+ = (0, 2A(u_1))$. Owing to the continuity of $A(u)$, it suffices to show that $d(u)>0$ for $u\leq u_1$.
  Notice that $d(u)$ is a polynomial of order 6. A straightforward calculation yields (the $u$-dependence is omitted for clarity)
  \begin{equation} \label{eq:dd1}
    \begin{aligned}
      d^{(1)} &=2g^{(2)}g^{(3)}-\beta(g^{(5)}g+g^{(4)}g^{(1)}), \\
      d^{(2)} &=2g^{(3)2}+(2-\beta)g^{(2)}g^{(4)}-2\beta g^{(1)}g^{(5)}, \\
      d^{(3)} &=(6-\beta)g^{(3)}g^{(4)}+(2-3\beta)g^{(2)}g^{(5)}, \\
      d^{(4)} &=(6-\beta)g^{(4)2}+(8-4\beta)g^{(3)}g^{(5)}, \\
      d^{(5)} &=(20-6\beta)g^{(4)}g^{(5)}, \\
      d^{(6)} &=(20-6\beta)g^{(5)2}.
    \end{aligned}
  \end{equation}
  Obviously, we have $d(u_1)>0$, $d^{(1)}(u_1)<0$, $d^{(5)}(u_1)<0$ and $d^{(6)}(u_1)>0$.
  Setting $a=u_2-u_1>0$ and $t=\frac{\tilde{u}-u_1}{u_2-u_1}\in(0,1)$, we obtain
  \begin{equation} \label{eq:dd2}
    \begin{aligned}
      d^{(2)}(u_1) &=24a^4[(16-2\beta)t^2+(20-4\beta)t+3], \\
      d^{(3)}(u_1) &=-48a^3[6(6-\beta)t^2+5(20-6\beta)t+6(6-\beta)], \\
      d^{(4)}(u_1) &=576a^2[(6-\beta)t^2+(44-14\beta)t+(34-9\beta)].
    \end{aligned}
  \end{equation}
  For $0<\beta<\frac{10}{3}$, the above $t$-parabolae are positive on $[0,\infty)$. Thus, we have $d^{(2)}(u_1)>0$, $d^{(3)}(u_1)<0$ and $d^{(4)}(u_1)>0$. The Taylor expansion of $d(u)$ at $u_1$ then verifies the positivity of $d(u)$ for $u\le u_1$.

  A similar argument can be used to show $\bigcup \limits_{u \ge u_2} P_u^- = (0,\infty)$.

  \textbf{Case III}. In this case we take $\hat u_2 = u^{(2)}_1$ and if $u_2\geq u^{(4)}_1$, we take $\hat u_3 = u_2$. It is then easy to see that $h(\hat u_2;s)<0<h(\hat u_3;s)$ for any $s>0$.

  For $u_2 < u^{(4)}_1$, we can show the existence of $\hat{u}_3\in(u_2, u^{(4)}_1)$ such that $h(\hat u_3,s)>0$ by verifying the first equality in (\ref{eq:Pu}) with $I=(u_2,u_1^{(4)})$.
  Clearly, for $u\in I$ we have $c_2 g^{(4)}(u)<0$, $P_{u_2}^+=(0,2A(u_2))$ and $A(u) \to \infty$ as $u\to u^{(4)}_1$. Then it suffice to show $d(u)>0$ for $u\in(u_2,u_1^{(4)})$.
  Note that $g^{(j)}(u)<0$ for $u\in (u_2,u^{(4)}_1)$ because $u_2=u^{(1)}_3 > u^{(2)}_2 > u^{(3)}_1$ and $u^{(4)}_1<u^{(3)}_2<u^{(1)}_4<\tilde u$.
  Then we see that $d(u_2)>0$ and $d^{(1)}(u_2)>0$.
  Moreover, if $\beta \le 2$, we see from (\ref{eq:dd1}) that $d^{(2)}(u)>0$ throughout $(u_2,u^{(4)}_1)$. Consequently, we have $d(u)>0$ for $u\in (u_2,u^{(4)}_1)$.
  On the other hand, if $\beta>2$, it follows from (\ref{eq:dd1}) that $d^{(3)}(u)>0$ for $u\in (u_2,u^{(4)}_1)$.
  As for $d^{(2)}(u_2)$, it has the same form as in (\ref{eq:dd2}) except that $u_1$ and $u_2$ are permuted. Clearly, we have $d^{(2)}(u_2)>0$. In conclusion, we have shown that $d(u)>0$ for $u\in(u_2,u^{(4)}_1)$ in both cases.

  To show the existence of $\hat u_4$, we verify the second equality in (\ref{eq:Pu}) with $I'=(\tilde{u},\infty)$.
  As above, it suffices to show $d(u)>0$ for $u>\tilde u$. From (\ref{eq:dd1}), we see that $d(\tilde u)>0$, $d^{(5)}(\tilde u)>0$ and $d^{(6)}(\tilde u)>0$.
  Furthermore, we set $a=\tilde{u}-u_1>0$ and $t=\frac{\tilde{u}-u_2}{\tilde{u}-u_1}\in[0,1)$. A straightforward calculation gives
  \[
    g^{(1)}(\tilde u) = a^4 p_1(t), \ g^{(2)}(\tilde u)=4a^3p_2(t), \ g^{(3)}(\tilde u) = 6a^2p_3(t), \ g^{(4)}(\tilde u)=48ap_4(t)
  \]
  with $p_1(t) = t^2$, $p_2(t) = t^2+t$, $p_3(t)=t^2+4t+1$ and $p_4(t)=t+1$. Owing to the following inequalities
  \[
    \begin{aligned}
      p_2(t) p_3(t) &= (t^2+t)(t^2+4t+1) \ge (t^2+t)\cdot6t = 6p_1(t)p_4(t), \\
      p_3(t)^2 &=((t+1)^2+2t)^2 \ge 8t(t+1)^2 = 8p_2(t)p_4(t), \\
      p_3(t)^2 &=(t^2+4t+1)^2 \ge 36t^2 = 36p_1(t), \\
      p_3(t)p_4(t) &=(t^2+4t+1)(t+1) \ge 6t(t+1) = 6p_2(t), \\
      3p_4(t)^2 &= 3(t+1)^2 \ge 4t + 2(t+1)^2 = 2p_3(t),
    \end{aligned}
  \]
  we derive, by a direct calculation, that
  \[
    \begin{aligned}
      d^{(1)}(\tilde u) &= 48a^5 \left(p_2(t)p_3(t) - \beta p_4(t) p_1(t) \right) \ge 48a^5 (6-\beta) p_1(t) p_4(t) >0, \\
      d^{(2)}(\tilde u) &= 72a^4 p_3(t)^2 - 192(\beta-2)a^4 p_2(t) p_4(t) -240\beta a^4 p_1(t) \\
      & \ge (256-192(\beta-2)) a^4 p_2(t) p_4(t) + 240(6-\beta) a^4 p_1(t) >0, \\
      d^{(3)}(\tilde u) &= 288(6-\beta)a^3p_3(t)p_4(t) - 480(3\beta-2) a^3 p_2(t) \ge 96(118-33\beta) a^3 p_2(t)>0, \\
      d^{(4)}(\tilde u) &=48^2(6-\beta)a^2 p_4(t)^2 - 720(4\beta-8)a^2 p_3(t) \ge 192(78-23\beta) a^2 p_3(t) > 0.
    \end{aligned}
  \]
  These ensure that $d(u)>0$ and thereby the existence of $\hat u_4$.

  A similar argument as in Case (II) can be used to prove the existence of $\hat u_1 \le u_1$ such that $h(\hat u_1;s)>0$ for any $s>0$.

  \textbf{Case IV}. This case can be converted to Case (III) with $-u_2<-u_1 \le -\tilde u$ by introducing $\tilde{g}(u)=-g(-u) = (u+u_1)^2(u+u_2)^2(u+\tilde{u})$. We see that $-h(-u;s)=\tilde{g}(u)- c_1s\tilde{g}^{(2)}(u) + c_2s^2\tilde{g}^{(4)}(u)$ has 5 distinct roots, so is $h(u;s)$. Hence, the proof is complete.
\end{proof}

\section{Structural stability} \label{sec:stab}

In this section we prove Theorem \ref{thm:stab}, namely, checking the structural stability condition (I)--(III) in Subsection \ref{sec:ssc}.

For Condition (I), we calculate the Jacobian of $S(M) = \frac{1}{\tau} \left(\rho\Delta^{eq}(U,\sqrt{\theta}) -M \right)$ as
\begin{equation} \label{eq:smm}
  S_M(M) =\frac{1}{\tau}
  \begin{bmatrix}
    0 &&&& \\
    & 0 &&& \\
    && 0 && \\
    s_1 & s_2 & s_3 & -1 & \\
    s_4 & s_5 & s_6 && -1
  \end{bmatrix}
\end{equation}
with
\[
\begin{aligned}
  s_1=U^3-3U\theta,\quad &s_2=3(\theta-U^2),\quad s_3 = 3U, \\
  s_4=3(U^4-2U^2\theta-\theta^2),\quad &s_5=-8U^3,\quad s_6=6(U^2+\theta).
\end{aligned}
\]
Take
\begin{equation} \label{eq:Pm}
  P(M) =
  \begin{bmatrix}
    1 &&&& \\
    & 1 &&& \\
    && 1 && \\
    -s_1 & -s_2 & -s_3 & 1 & \\
    -s_4 & -s_5 & -s_6 && 1
  \end{bmatrix},
\end{equation}
It is obvious that $P(M)S_M(M)=\tau^{-1}\text{diag}(0,0,0,-1,-1)P(M)$. This justifies Condition (I).  Note that the choice of $P$ is unique up to a block-diagonal matrix.

As to Condition (II), we know from Theorem \ref{thm:hyp2} that the two-node moment system (\ref{eq:eqmomsys}) with even kernels is strictly hyperbolic if $3\le \frkm_4 < 6$. Namely, the coefficient matrix $A(M)$ has 5 distinct real eigenvalues $\lambda_i=\lambda_i(M) (i=1,...,5)$. Corresponding to these eigenvalues, the left eigenvectors form the following matrix
\begin{equation} \label{eq:L}
  L=L(M)=
  \begin{bmatrix}
    \lambda_1^4 & \lambda_1^3 & \lambda_1^2 & \lambda_1 & 1 \\
    \lambda_2^4 & \lambda_2^3 & \lambda_2^2 & \lambda_2 & 1 \\
    \lambda_3^4 & \lambda_3^3 & \lambda_3^2 & \lambda_3 & 1 \\
    \lambda_4^4 & \lambda_4^3 & \lambda_4^2 & \lambda_4 & 1 \\
    \lambda_5^4 & \lambda_5^3 & \lambda_5^2 & \lambda_5 & 1
  \end{bmatrix}
  \begin{bmatrix}
    -1 &&&& \\
    a_4 & -1 &&& \\
    a_3 & a_4 & -1 && \\
    a_2 & a_3 & a_4 & -1 & \\
    a_1 & a_2 & a_3 & a_4 &-1
  \end{bmatrix},
\end{equation}
which can be easily verified. With this matrix, the symmetrizer $A_0=A_0(M)$ in Condition (II) must be chosen as $A_0=L^T \Lambda L$ with $\Lambda$ an arbitrary positive definite diagonal matrix to be determined \cite{Yong1999}.

The rest of this section is to choose the diagonal matrix $\Lambda=\Lambda(M)$ such that Condition (III) is satisfied for $M$ in the equilibrium manifold.
Since $P(M)S_M(M)=\tau^{-1}\text{diag}(\mathbf{0}_3,-I_2)P(M)$, it is equivalent to find $\Lambda$ such that the matrix $P^{-T}A_0P^{-1}=P^{-T}L^T \Lambda LP^{-1}$ is block diagonal with the same partition as $\text{diag}(\mathbf{0}_3,-I_2)$, meaning that the first three columns of $\sqrt\Lambda LP^{-1}$ are orthogonal to the last two columns.
Note that the existence of such a $\Lambda$ is independent of the choice of $P$.
Denote by $r_i\in\mathbb R^5$ the $i$th column of $\sqrt\Lambda L$.
Since
\[
  P^{-1} =
  \begin{bmatrix}
    1 &&&& \\
    & 1 &&& \\
    && 1 && \\
    s_1 & s_2 & s_3 & 1 & \\
    s_4 & s_5 & s_6 && 1
  \end{bmatrix},
\]
the orthogonality gives six equations
\begin{equation} \label{eq:relcond3}
    (r_i, r_j) + s_i (r_4, r_j) + s_{i+3} (r_5, r_j) = 0 \quad \text{for } i=1,2,3 \text{ and } j=4,5.
\end{equation}
Here $(\cdot,\cdot)$ represents the dot product of vectors.

To show that (\ref{eq:relcond3}) can be used to determine $\Lambda$, we write $\sqrt\Lambda=\text{diag}(x_i)_{i=1}^5$. Then it follows from (\ref{eq:L}) that
\begin{equation} \label{eq:ri}
   r_i = \left( x_1\sum_{j=i}^5 a_j \lambda_1^{j-i}, \ \dots, \ x_5\sum_{j=i}^5 a_j \lambda_5^{j-i} \right)^T
\end{equation}
with $a_5=-1$. With this, the dot products can be written as
\[
  (r_k,r_5) = -\sum_{i=1}^5 x_i^2 \sum_{j=k}^5 a_j \lambda_i^{j-k}.
\]
Moreover, we introduce
\[
   r_4'=r_4+a_4r_5=-(x_1\lambda_1,\dots,x_5\lambda_5)^T.
\]
It is clear that the equations in (\ref{eq:relcond3}) with $j=4$ can be replaced with
\[
  (r_i, r_4') + s_i (r_4, r_4') + s_{i+3} (r_5, r_4') = 0,\quad i=1,2,3,
\]
and
\begin{equation} \label{eq:r4pr}
\begin{split}
  (r_k,r_4') &=\sum_{i=1}^5 x_i^2 \left[ \lambda_i \left( \lambda_i^{5-k} - a_4 \lambda_i^{4-k} - \cdots - a_k \right) -a_{k-1}+a_{k-1} \right] \\
  &=(r_{k-1},r_5)+a_{k-1}\sum_{i=1}^5x_i^2.
\end{split}
\end{equation}
These indicate that (\ref{eq:relcond3}) is a system of six linear equations for the five unknowns $x_i^2$. Note that the coefficients of this system all depend on $M$.

Since Condition (III) is posed only for $M$ in the equilibrium, we only need to calculate the coefficients for $M$ in the equilibrium manifold for the moment system (\ref{eq:eqmomsys}):
\[
  \mathcal E=\{ M\in \mathcal{M}(\Omega^{tot}) | M_j= \rho \Delta_j^{eq}(U,\sqrt{\theta}) \text{ for }j=0,\dots,4 \}
\]
with $\Delta_j^{eq}(U,\sqrt{\theta})$ defined in (\ref{eq:unc}) and
\[
  \rho=M_0,\quad U=M_1/M_0\quad \text{and}\quad \theta=(M_0M_2-M_1^2)/M_0^2.
\]
About this $\mathcal{E}$, we have
\begin{proposition} \label{prop:eq2}
  For any $\rho,\theta>0$ and $U\in\mathbb{R}$, consider equations
  \[
    M_j:=\sum_{i=1}^2w_i\Delta_j(u_i,\sigma)=\rho \Delta_j^{eq}(U,\sqrt{\theta}),\quad j=0,1,...,4,
  \]
  for $W=(w_1,u_1,w_2,u_2,\sigma)\in\Omega^{tot}$. When $\frkm_4<3$, the equations have no solution; when $\frkm_4=3$, the solutions satisfy $u_1=u_2=U,\sigma=\sqrt{\theta}$ and $w_1+w_2=\rho$; and when $\frkm_4>3$, there is a unique solution given as
  \[
    w_{1,2}=\frac{\rho}{2}, \quad
    u_{1,2}=U \mp \nu\sigma, \quad
    \sigma = \left(\frac{\theta}{\nu^2+1}\right)^{\frac{1}{2}},\quad \nu = \left(\frac{\frkm_4-3}{2}\right)^{\frac{1}{4}}.
  \]
  In particular, the equilibrium manifold is nonempty if and only if $\frkm_4\geq3$.
\end{proposition}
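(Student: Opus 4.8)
The plan is to reduce the five equilibrium equations to a short explicit algebraic system and then enumerate the solutions by hand. First I would use translation covariance to normalize $U=0$: shifting every node $u_i\mapsto u_i-U$ changes $(M_0,\dots,M_4)$ by the invertible lower-triangular linear map with entries $\binom{j}{k}U^{j-k}$, and by (\ref{eq:deltajex}) the Maxwellian column $\big(\Delta_0^{eq},\dots,\Delta_4^{eq}\big)(U,\sqrt\theta)$ transforms under exactly the same map; hence $W=(w_i,u_i,\sigma)\in\Omega^{tot}$ solves the system with parameters $(\rho,U,\theta)$ iff $(w_i,u_i-U,\sigma)$ solves the one with $(\rho,0,\theta)$. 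Then, invoking the even symmetry $\frkm_1=\frkm_3=0$ together with $\frkm_0=\frkm_2=1$ and the centered Gaussian moments $\Delta_0^{eq}=1$, $\Delta_1^{eq}=\Delta_3^{eq}=0$, $\Delta_2^{eq}=\theta$, $\Delta_4^{eq}=3\theta^2$, I would expand each $\Delta_j(u_i,\sigma)$ and collect terms in the power sums $S_j:=M_j^*(\sigma)=\sum_i w_iu_i^j$ (see (\ref{eq:Mstar})--(\ref{eq:defmstar})), obtaining
\[
 S_0=\rho,\qquad S_1=0,\qquad S_2=\rho(\theta-\sigma^2),\qquad S_3=0,\qquad S_4=\rho\big[3\theta^2-6\sigma^2\theta+(6-\frkm_4)\sigma^4\big].
\]

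Since $w_i>0$ forces $S_2=\sum_iw_iu_i^2\ge0$, the third identity already gives $\sigma^2\le\theta$, and I would split into two cases. If $\sigma^2=\theta$, then $S_2=0$, hence $u_1=u_2=0$; consequently $S_3=S_4=0$ and the last identity degenerates to $\frkm_4\sigma^4\rho=3\rho\theta^2$, i.e. $\frkm_4=3$, leaving only the constraint $w_1+w_2=\rho$. If $\sigma^2<\theta$, then $S_2>0$ and the Hankel matrix $H_1(\{S_j\})=\mathrm{diag}(\rho,S_2)$ is positive definite; by Lemma \ref{lem:hankpos} the nodes are the distinct roots of the monic quadratic $x^2-px-q$ fixed by $S_0q=S_2$ and $S_2p=S_3$, whence $p=0$, $q=\theta-\sigma^2>0$, $u_{1,2}=\mp\sqrt{\theta-\sigma^2}$, and $S_1=0$ pins $w_1=w_2=\rho/2$. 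Substituting $S_4=\rho q^2$ into the last identity and setting $x:=\theta/\sigma^2>1$ reduces everything to the single quadratic $2x^2-4x+(5-\frkm_4)=0$, with roots $x=1\pm\sqrt{(\frkm_4-3)/2}$. A root $x>1$ exists precisely when $\frkm_4>3$, and then it is unique, $x=1+\nu^2$ with $\nu=\big((\frkm_4-3)/2\big)^{1/4}$; undoing the shift gives $\sigma^2=\theta/(\nu^2+1)$, $u_{1,2}=U\mp\nu\sigma$, $w_{1,2}=\rho/2$.

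Combining the two cases yields the stated trichotomy: for $\frkm_4<3$ neither case is solvable, so $\mathcal E=\emptyset$; for $\frkm_4=3$ only the degenerate case survives, giving exactly $\{u_1=u_2=U,\ \sigma=\sqrt\theta,\ w_1+w_2=\rho,\ w_i>0\}$; for $\frkm_4>3$ only the nondegenerate case survives, giving the unique $W$ above. In particular $\mathcal E\ne\emptyset$ iff $\frkm_4\ge3$. The computation itself is routine; the point that needs care is the case distinction, because the locus $u_1=u_2$ is admissible in $\Omega^{tot}$ (though excluded from $\Omega$), and it is exactly there — at $\frkm_4=3$ — that the two regimes meet. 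Overlooking this would either lose the one-parameter family at $\frkm_4=3$ or spuriously assert uniqueness there, so I would treat that boundary case first and explicitly before turning to the generic computation.
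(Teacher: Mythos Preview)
Your proposal is correct and follows essentially the same route as the paper. Both arguments reduce by translation to the centered power-sum system $S_0=\rho$, $S_1=S_3=0$, $S_2=\rho(\theta-\sigma^2)$, and the $S_4$-identity, then split on whether $\sigma^2=\theta$; the paper additionally scales out $\sigma$ and reaches the dichotomy via the factorization $(\theta'-1)(u_1'+u_2')=0$, whereas you recover $u_1+u_2=0$ in the nondegenerate branch through the Newton recurrence $S_3=pS_2$, but the resulting algebra and case analysis coincide.
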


\begin{proof}
  Notice that
  \begin{align*}
   \sum_{i=1}^2 w_i \Delta_j \left( \frac{u_i-U}{\sigma}, 1 \right) =
   &\int \xi^j \sum_{i=1}^2 w_i \calk \left( \xi-\frac{u_i-U}{\sigma} \right) d\xi \\
   =& \int \left( \frac{\eta-U}{\sigma}\right)^j \sum_{i=1}^2 \frac{w_i}{\sigma} \calk \left( \frac{\eta-u_i}{\sigma} \right) d\eta
   =\sum_{k=0}^j\binom{j}{k}(-U)^k\frac{M_{j-k}}{\sigma^j}
  \end{align*}
  and similarly,
  \[
    \rho\Delta_j^{eq} \left( 0,\frac{\sqrt{\theta}}{\sigma}\right)
    = \sum_{k=0}^j \binom{j}{k} (-U)^k \frac{\rho\Delta_{j-k}^{eq}(U,\sqrt{\theta})}{\sigma^j}.
  \]
  Then the given equations $M_j=\rho\Delta_j^{eq}(U,\sqrt{\theta})$ are equivalent to
  \begin{equation} \label{eq:Mjeqtp}
    \sum_{i=1}^2 \frac{w_i}{\rho} \Delta_j \left(\frac{u_i-U}{\sigma},1 \right)
    = \Delta_j^{eq} \left(0,\frac{\sqrt{\theta}}{\sigma}\right),\quad j=0,...,4.
  \end{equation}
  Denote $w_i'=\frac{w_i}{\rho}$, $u_i'=\frac{u_i-U}{\sigma}$ and $\theta'=\frac{\theta}{\sigma^2}$. Recall from (\ref{eq:deltajex}) that
  \[
    \Delta_j(u,\sigma) = \sum_{k=0}^j \binom{j}{k} \frkm_k \sigma^k u^{j-k}
  \]
  with $\frkm_3=0$ for even kernels. Then equations (\ref{eq:Mjeqtp}) can be rewritten as
  \begin{subequations}
  \begin{align}
      w_1' + w_2' &= 1, \label{eq:eqa} \\
      w_1'u_1' + w_2'u_2' = w_1'u_1'^3 + w_2'u_2'^3 &=0, \label{eq:eqb} \\
      w_1'u_1'^2 + w_2'u_2'^2 &= \theta' - 1, \label{eq:eqc} \\
      w_1'u_1'^4 + w_2'u_2'^4 &= 3\theta'^2-6(\theta'-1)-\frkm_4. \label{eq:eqd}
  \end{align}
  \end{subequations}
  Multiplying both sides of (\ref{eq:eqc}) with $(u_1'+u_2')$ and using (\ref{eq:eqb}), we obtain
  \[
    (\theta'-1)(u_1'+u_2')=0.
  \]
  This gives $u_1'=-u_2':=\nu\neq0$ if $\theta'\neq 1$.
  Then we see from (\ref{eq:eqa}) and (\ref{eq:eqb}) that $w_1'=w_2'=\frac{1}{2}$. Thus, (\ref{eq:eqd}) becomes $2\nu^4 = \frkm_4 - 3$ with $\theta'=1+\nu^2$.
  When $\theta'=1$, it follows from (\ref{eq:eqc}) that $u_1'=u_2'=0$ due to $w_i'>0$ and $\frkm_4=3$ from (\ref{eq:eqd}).
  Hence, the given equations have a solution if and only if $\frkm_4\ge 3$.
\end{proof}

At equilibrium, it is seen from Proposition \ref{prop:eq2} and (\ref{eq:ut2}) that $\tilde{u}=U$ and
\[
  g(u;W)=(u-U-\sigma\nu)^2(u-U+\sigma\nu)^2(u-U).
\]
We then rewrite $g(u;W)$ and
\[
  c(u;W)=g(u;W)+b_2\sigma^2 g^{(2)}(u;W) + b_4\sigma^4 g^{(4)}(u;W),
\]
with $b_j$ in (\ref{eq:bj2}) as $g(u;U,\sigma)$ and $c(u;U,\sigma)$ to manifest the dependence on $U$ and $\sigma$.
Clearly, we have $g(u;U,\sigma)=\sigma^5 g\left(\frac{u-U}{\sigma};0,1\right)$ and $c(u;U,\sigma)=\sigma^5 c\left(\frac{u-U}{\sigma};0,1\right)$.

A direct calculation yields $c(u;0,1)=u^5-B_1u^3+B_2u$ with
\begin{equation} \label{eq:B12}
  B_1 = 2\nu^2+10 \quad \text{and } B_2= -9\nu^4 + 6\nu^2 + 15.
\end{equation}
Recalling $c(u;W)=u^5-a_4u^4-\cdots-a_0$, we derive
\begin{equation} \label{eq:ajeq}
\begin{split}
  a_0 &= U^5 -B_1 U^3\sigma^2 + B_2 U \sigma^4, \
  a_1 =-5U^4 + 3B_1U^2\sigma^2 - B_2\sigma^4, \\
  a_2 &= 10U^3-3B_1U\sigma^2, \quad
  a_3 = -10U^2 + B_1\sigma^2, \quad
  a_4 = 5U.
\end{split}
\end{equation}
This determines the matrix $L=L(M)$ in (\ref{eq:L}) on the equilibrium manifold.

Denote by $\pm \mu_1$, $\pm \mu_2$ and 0 the five distinct roots of the polynomial $c(u;0,1)$. Then the roots $\lambda_i$ of $c(u;U,\sigma)$ can be written as
\begin{equation} \label{eq:croots}
    \lambda_{1,2} = U \pm \mu_1 \sigma, \quad
    \lambda_{3,4} = U \pm \mu_2 \sigma, \quad
    \lambda_5 = U.
\end{equation}
By introducing $V=U/\sigma$, it is seen that $r_i$ is a homogeneous polynomial of $\sigma$ (of degree $5-i$), so the relations in (\ref{eq:relcond3}) are all homogeneous with $\sigma$.
Thus, we may set $\sigma=1$ for the following calculations.

Set
\begin{displaymath}
   Y_0=\sum_{i=1}^5 x_i^2,\quad
   Y_j=\left( x_1^2+(-1)^jx_2^2 \right)\mu_1^j + \left( x_3^2+(-1)^jx_4^2 \right)\mu_2^j,\quad\text{for } j=1,\dots, 4.
\end{displaymath}
Having (\ref{eq:ajeq}) \& (\ref{eq:croots}), the dot products $(r_i,r_5)$ can be calculated as
\[
  \begin{aligned}
    (r_1,r_5) &= (U^4-B_1U^2+B_2)Y_0 - (U^3-B_1U) Y_1 + (U^2-B_1)Y_2 - U Y_3 + Y_4, \\
    (r_2,r_5) &= (-4U^3+2B_1U)Y_0 + (3U^2-B_1)Y_1 - 2UY_2 + Y_3, \\
    (r_3,r_5) &= (6U^2-B_1)Y_0 - 3UY_1 + Y_2, \\
    (r_4,r_5) &= -4UY_0 + Y_1, \\
    (r_5,r_5) &= Y_0.
  \end{aligned}
\]
Furthermore, we use (\ref{eq:r4pr}) to get
\[
  \begin{aligned}
    (r_1,r_4') &= a_0 Y_0 = (U^5 -B_1 U^3 + B_2 U)Y_0, \\
    (r_2,r_4') &= (2B_1U^2-4U^4)Y_0 - (U^3-B_1U) Y_1 + (U^2-B_1)Y_2 - U Y_3 + Y_4, \\
    (r_3,r_4') &= (6U^3-B_1U)Y_0 + (3U^2-B_1)Y_1 - 2UY_2 + Y_3, \\
    (r_4,r_4') &= -4U^2Y_0 - 3UY_1 + Y_2, \\
    (r_5,r_4') &= UY_0 + Y_1.
  \end{aligned}
\]
With these and those in (\ref{eq:smm}), the equations in (\ref{eq:relcond3}) can be written as
\begin{subequations} \label{eq:feqn}
  \begin{align}
    0 =& \left[ (6\theta-B_1)U^2 + B_2-3\theta^2 \right] Y_0 + (B_1-3\theta)U Y_1 \label{eq:fa} \\ &+ (U^2-B_1)Y_2 - UY_3 + Y_4, \nonumber \\
    0 =& (2B_1-12\theta)UY_0 + (3\theta-B_1)Y_1 - 2UY_2 + Y_3, \label{eq:fb} \\
    0 =& (6\theta-B_1) Y_0 + Y_2, \label{eq:fc} \\
    0 =& \left[ (6\theta-B_1)U^3 + (B_2-3\theta^2)U \right] Y_0 + (3U^2\theta-3\theta^2)Y_1 + (U^3-3U\theta) Y_2, \label{eq:fd} \\
    0 =& (2B_1-12\theta)U^2Y_0 + (B_1-9\theta)UY_1 - (B_1-3\theta+2U^2)Y_2 -UY_3+Y_4, \label{eq:fe} \\
    0 =& (6\theta-B_1)UY_0 + (6\theta-B_1)Y_1 + UY_2 + Y_3. \label{eq:ff}
  \end{align}
\end{subequations}

We shall show that there exists $x_i^2$ solving these equations if and only if $\frkm_4<5$. To do this, we use (\ref{eq:fc}) and deduce from (\ref{eq:fb}) \& (\ref{eq:ff}) that $Y_3=(B_1-3\theta)Y_1=(B_1-6\theta)Y_1$ and hence $Y_1=Y_3=0$.
By the definitions of $Y_1$ and $Y_3$, it follows that $x_1^2=x_2^2$ and $x_3^2=x_4^2$ due to $\mu_1\ne\mu_2$.
On the other hand, notice that $\theta=\nu^2+1$ due to Proposition \ref{prop:eq2} with $\sigma=1$.
We deduce from (\ref{eq:B12}) that
\begin{equation} \label{eq:coef}
    B_2-3\theta^2 = 3\theta(B_1-6\theta) = -12(\nu^4-1) = -6(\frkm_4-5).
\end{equation}
Thus, it is not difficult to see that all the equations in (\ref{eq:feqn}) are linear combinations of (\ref{eq:fa}) and (\ref{eq:fc}).

Furthermore, since $\mu_1$ and $\mu_2$ are the nonzero roots of $c(u;0,1)=u^5-B_1u^3+B_2u$, we have $\mu_1^2+\mu_2^2=B_1>0$ and $\mu_1^2\mu_2^2=B_2>0$.
Thus, by the definitions of $Y_2$ and $Y_4$, we have $B_1Y_2 = Y_4 + B_2(x_1^2+\cdots+x_4^2)$.
Consequently, (\ref{eq:fa}) and (\ref{eq:fc}) are equivalent to (using $x_1^2=x_2^2$ and $x_3^2=x_4^2$)
\begin{subequations}
   \begin{align}
     (B_2-3\theta^2)Y_0&= B_2 \cdot 2(x_1^2+x_3^2),\label{eq:ra}\\
     Y_2&= (B_1-6\theta)Y_0.\label{eq:rb}
   \end{align}
\end{subequations}
Therefore, if $\frkm_4\geq5$, from (\ref{eq:coef}) and (\ref{eq:rb}) we see that $Y_0$ and $Y_2$ cannot be both positive. This together with the definitions of $Y_0$ and $Y_2$ indicates the nonexistence of the $x_i^2$'s and thereby the diagonal positive matrix $\Lambda$.

Finally, we show the existence of the $x_i^2$'s if $\frkm_4<5$. From (\ref{eq:coef}) we see that $\frkm_4<5$ if and only if $0\leq\nu<1$. Substituting (\ref{eq:coef}) into (\ref{eq:ra}) and using $Y_0=\sum_{i=1}^5x_i^2$, we easily obtain
\[
   x_1^2+x_3^2=2\frac{1-\nu^2}{\theta}x_5^2.
\]
Substituting this into (\ref{eq:rb}) and putting them into matrix form, we arrive at
\[
    \begin{bmatrix}
      1 & 1 \\ \mu_1^2 & \mu_2^2
    \end{bmatrix}
    \begin{bmatrix}
      x_1^2 \\ x_3^2
    \end{bmatrix}
    = \frac{2(1-\nu^2)}{\theta}x_5^2
    \begin{bmatrix}
      1 \\ 5-3\nu^2
    \end{bmatrix}.
\]
Assume $\mu_1<\mu_2$. It is not difficult to verify that the two components $x_1^2$ and $x_3^2$ of the solution to this system are positive if and only if $\mu_1^2 < 5-3\nu^2 < \mu_2^2$.
Notice that
\[
   \mu_2^2 > \frac{\mu_1^2+\mu_2^2}{2} = \frac{B_1}{2} = \nu^2+5 \geq 5-3\nu^2\quad\text{and}
\]
\[
   \mu_1^2 = \frac{B_1}{2}-\sqrt{\frac{1}{4}B_1^2-B_2} = 5+\nu^2-\sqrt{10\nu^4+4\nu^2+10}< 5-3\nu^2
\]
for $0\le \nu<1$. Hence, the existence of positive $x_1^2$ and $x_3^2$ is demonstrated and the proof is complete.

\section{Specific kernel functions} \label{sec:exp}
In this section we present a number of specific kernel functions which satisfy the conditions required by our previous theoretical results, and then examine their performance with numerical tests.

\subsection{Examples of kernel functions}
Here are examples of the kernel functions, which may not be normalized. To check the conditions in our main results, we notice that, for even normalized kernels,
\[
  \frkm_4=\int\sqrt{\tilde{\frkm}_2}\calk(\sqrt{\tilde{\frkm}_2}\xi)\xi^4d\xi=\tilde{\frkm}_2^{-2}\int \calk(\eta)\eta^4d\eta=\tilde{\frkm}_4/\tilde{\frkm}_2^2,
\]
where $\tilde{\frkm}_4,\tilde{\frkm}_2$ are the $4^{th}$ and $2^{nd}$ moments of the unnormalized kernel, respectively.

\begin{example}[Gaussian distribution] \label{ex:gauss}
  Our first example is the most widely-used Gaussian distribution
  \[
    \calk(\xi)=\frac{1}{\sqrt{2\pi}}\exp \left( -\frac{\xi^2}{2} \right)
  \]
  in the EQMOM approach for the BGK equation \cite{Chalons2017}.
  It is even and normalized with $\frkm_4=3$. According to Theorem \ref{thm:stab}, the two-node Gaussian-EQMOM satisfies the structural stability condition.
  Indeed, for this kernel, it has been even shown in \cite{Huang2020} that the structural stability condition is fulfilled for the $n$-node EQMOM.
\end{example}

\begin{example} [Piecewise polynomials] \label{ex:evenp}
  Our next example is the piecewise polynomials
  \[
    \calk_j(\xi)=\frac{j+1}{2}(1-|\xi|)^j \mathbf 1_{|\xi|\le 1},\quad j=0,1,\dots.
  \]
  These are even functions and $\calk_1$ was used in \cite{CR14}. A straightforward calculation of the unnormalized $k$th-moment $\tilde{\frkm}_{j,k}$ gives
  \[
    \frac{\tilde{\frkm}_{j,4}}{\tilde{\frkm}_{j,2}^2}
   = \frac{6(j+2)(j+3)}{(j+4)(j+5)}
 \]
 which is in $[3,6)$ for $j\ge 3$ and in $[3,5)$ for $3\le j\le 18$.
 According to Theorems \ref{thm:hyp2} \& \ref{thm:stab}, the corresponding two-node EQMOM moment system is well-defined and strictly hyperbolic if $j\ge 3$, and satisfies the structural stability condition if $3\le j \le 18$.

 On the other hand, even polynomials
 \[
  \tilde \calk_j(\xi) = \frac{j+1}{2j}(1-|\xi|^j)\mathbf 1_{|\xi|\le 1}, \quad j=1,2,\dots
 \]
 cannot be used as EQMOM kernels since $\tilde{\frkm}_{j,4}/\tilde{\frkm}_{j,2}^2=\frac{9(j+3)^2}{5(j+1)(j+5)} < 3$, violating the conditions of Corollary \ref{thm:inj}.
\end{example}

\begin{example}[Kappa distribution] \label{ex:kappa}
  For kernels to be the kappa distribution \cite{PL2010}
  \[
    \calk(\xi) = \frac{\Gamma(\kappa+1)}{\kappa \sqrt{\pi\left( \kappa-\frac{3}{2} \right)}\Gamma\left(\kappa-\frac{1}{2}\right)} \left( \frac{\xi^2}{\kappa-\frac{3}{2}}+1 \right)^{-\kappa}
  \]
  with $\kappa>3$, where $\Gamma(z)=\int_0^{\infty} t^{z-1}e^{-t}dt$ is the gamma function,
  we claim that the two-node EQMOM map $\mathcal M$ in (\ref{eq:MWmap}) is injective. The resultant moment system is strictly hyperbolic if $\kappa>\frac{7}{2}$ and satisfies the structural stability condition if $\kappa>4$.

  To see this, we only need to consider the rescaled kernel $\calk(\xi)= I_{0,\kappa}^{-1} (\xi^2+1)^{-\kappa}$ with $I_{j,\kappa} = \int_{\mathbb R} \xi^{2j} (\xi^2+1)^{-\kappa} d\xi$.
  Obviously we have
  \[
    I_{j,\kappa} = \int_{\mathbb R} (\xi^{2j} - \xi^{2j-2}+\xi^{2j-2}) (\xi^2+1)^{-\kappa} d\xi = I_{j-1,\kappa-1}+I_{j-1,\kappa}.
  \]
  Moreover, integrating by parts gives
  \[
    I_{j,\kappa} = \frac{-1}{2(\kappa-1)} \int_{\mathbb R} \xi^{2j-1} d (\xi^2+1)^{1-\kappa} = \frac{2j-1}{2(\kappa-1)}I_{j-1,\kappa-1}
  \]
  for $\kappa>j+1/2$. Thus we obtain $I_{j,\kappa} = \frac{2j-1}{2j-2\kappa+1}I_{j-1,\kappa}$ and
  \[
    \frac{\tilde{\frkm}_4}{\tilde{\frkm}_2^2} = \frac{I_{2,\kappa}/I_{0,\kappa}}{\left( I_{1,\kappa}/I_{0,\kappa} \right)^2}
    = \frac{I_{2,\kappa}/I_{1,\kappa}}{I_{1,\kappa}/I_{0,\kappa}}
    =\frac{3(2\kappa-3)}{2\kappa-5} > 3.
  \]
  Therefore, by Corollary \ref{thm:inj} the two-node EQMOM is well defined. Moreover, we have $\tilde{\frkm}_4/\tilde{\frkm}_2^2<6$ and $\tilde{\frkm}_4/\tilde{\frkm}_2^2<5$ if $\kappa>\frac{7}{2}$ and $\kappa>4$, respectively.
  Consequently, the conditions of Theorems \ref{thm:hyp2} \& \ref{thm:stab} are satisfied if $\kappa>\frac{7}{2}$ or $\kappa>4$.
\end{example}

\begin{example}
  Let
  \[
    \calk_j(\xi) = \frac{1}{2\Gamma(j+1)}|\xi|^j e^{-|\xi|},\quad j=1,2,\dots.
  \]
  A straightforward calculation gives $\tilde{\frkm}_{4,j}/\tilde{\frkm}_{2,j}^2=\frac{(j+3)(j+4)}{(j+1)(j+2)}$, which is greater than 3 if and only if $j=1$. For $\calk_1(\xi)$, the ratio is $\frac{10}{3}<5$. According to Theorem \ref{thm:stab}, the resultant two-node EQMOM moment system satisfies the structural stability condition.
\end{example}

\begin{example} \label{ex:nonhyp}
  Let
  \[
    \calk(\xi)=\frac{1}{3}(1+\cos\xi)e^{-|\xi|}.
  \]
  A direct calculation leads to $\tilde{\frkm}_4/\tilde{\frkm}_2^2=14$. According to Corollary \ref{thm:inj} and Theorem \ref{thm:stab}, the two-node EQMOM is well-defined, but the resultant moment system is not hyperbolic.
\end{example}

\begin{example}[Uneven kernels] \label{ex:uneven}
  Let us give an example of uneven kernels which allows a well-defined and strictly hyperbolic two-node EQMOM moment system.
  This kernel function reads as
  \[
    \calk(\xi) = \frac{1}{c} \left( \frac{1-\xi}{\xi^{\alpha}} \mathbf 1_{0<\xi\le 1} + \frac{1+\xi}{|\xi|^{\beta}} \mathbf 1_{-1\le\xi<0} \right)
  \]
  with $\alpha=0.6060$, $\beta=0.5340$ and $c=\frac{1}{(1-\alpha)(2-\alpha)} + \frac{1}{(1-\beta)(2-\beta)}=3.2845$.
  The normalized moments $\frkm_j$ of $\calk(\xi)$ are $\frkm_3=-0.04006$ and $\frkm_4=4.7455$.
  Thus, we have $\frkm_4>3+\frac{9}{8}\frkm_3^2$ and the corresponding two-node EQMOM is well-defined.
  Its strict hyperbolicity is ensured by Theorem \ref{prop:schy2} as the polynomial
  \[
    p(t)=t^5-0.5t^3+6.6766\times 10^{-3}t^2+0.052271t-2.7789\times 10^{-3}
  \]
  can be shown to have 5 distinct real roots (this can also be verified numerically).
\end{example}

\begin{example}
  Our last kernel function is
  \[
    \mathcal{K}(\xi)=\xi e^{-\xi}\mathbf{1}_{\xi>0}
  \]
  and its unnormalized moments are obviously $\tilde{\frkm}_j=(j+1)!$.
  It is not difficult to verify the conditions of Corollary \ref{thm:inj} and, thereby, the two-node EQMOM is well-defined.
  Moreover, a direct calculation via (\ref{eq:bj}) gives $b_1=-2$, $b_2=1$ and $b_j=0$ for $j\geq3$. Thus, the polynomial in Theorem \ref{prop:schy2} for the two-node case is $p(t)=t^3(t-1)^2$ and has two nonzero roots. By Theorem \ref{prop:schy2}, the two-node EQMOM moment system is strictly hyperbolic.
\end{example}

\subsection{Numerical validation} \label{subsec:num}
In this subsetion, we use a Riemann problem of the Euler equations to show that the kernel functions given in the previous subsection produce satisfactory results if they satisfy the conditions required by our theory; otherwise they lead to spurious results.
The initial data of the Riemann problem are
\[
  \rho(0,x) = \left\{
  \begin{aligned}
    3.093, \quad &x<0, \\
    1, \quad &x>0,
  \end{aligned}
  \right. \quad U(0,x)=0, \quad \theta(0,x)=1,
\]
for the Euler equation. These data are used to determine the equilibrium distribution $f^{eq}(\rho,U,\theta;\xi)$ in the kinetic equation (\ref{eq:1D-BGK}) and thereby initial moments.

The computational domain $-1 \le x \le 1$ is discretized into 1000 uniform cells. The Neumann boundary condition $\partial_x f=0$ is applied on the endpoints $x=\pm 1$.
The spatial fluxes are treated as in \cite{Chalons2017} and the time step is chosen so that the CFL number is less than 0.5.
Two limiting cases of $\tau$ in (\ref{eq:eqmomsys}) are considered here. The continuum limit has infinitely fast collisions with $\tau=0$, while the free-molecular limit assumes no collision ($\tau=\infty$). The analytical solutions for the two cases can be found in \cite{Toro2009} and \cite{Chalons2017}, respectively.
We test six different kernels for the both cases.

\begin{figure}[htbp]
  \centering
  \includegraphics[height=0.55\textwidth]{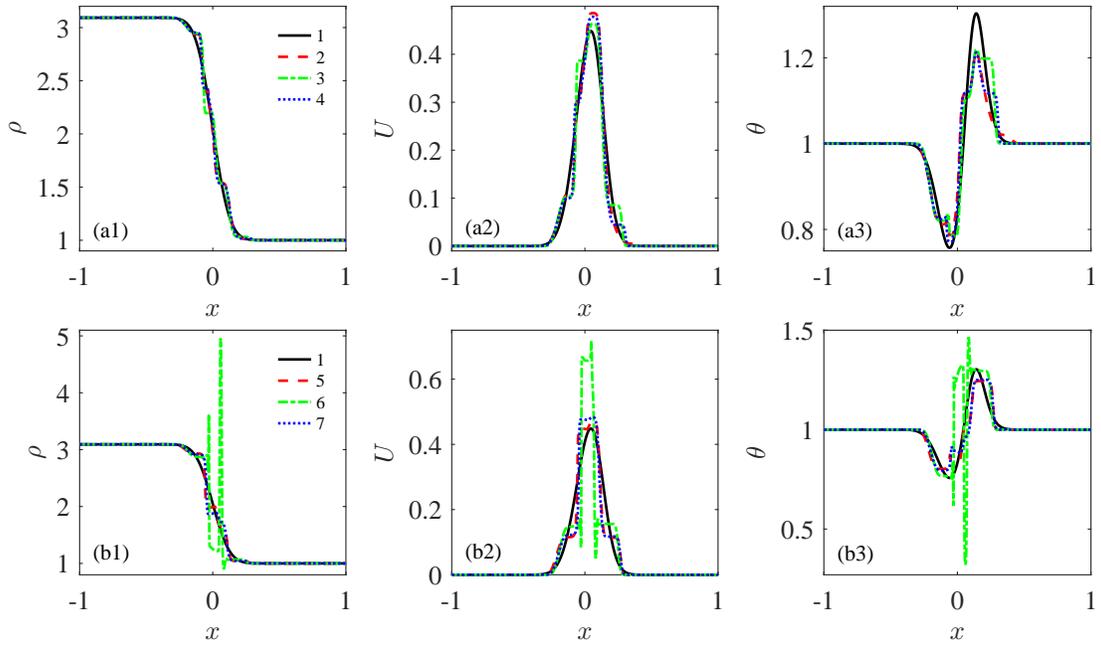}
  \caption{1-D Riemann problem with no collision: Profiles of density $\rho$, velocity $U$ and temperature $\theta$ at $t=0.1$. Kernels:
  1. Analytical solution; 2. Gaussian; 3. Kappa distribution with $\kappa=6$; 4. $K_3(\xi)$ in Example \ref{ex:evenp}; 5. $\calk(\xi)$ in Example \ref{ex:uneven}; 6. $\calk(\xi)$ in Example \ref{ex:nonhyp}; 7. $K_{25}(\xi)$ in Example \ref{ex:evenp}.}
  \label{fig:ncol}
\end{figure}

\begin{figure}[htbp]
  \centering
  \includegraphics[height=0.55\textwidth]{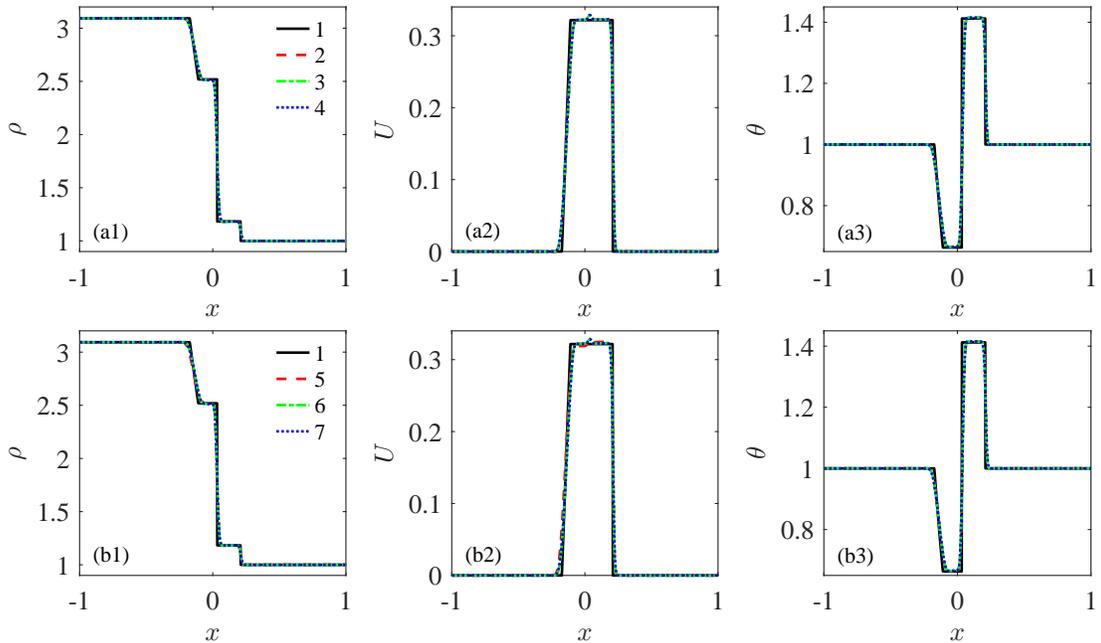}
  \caption{1-D Riemann problem in the continuum limit: Profiles of density $\rho$, velocity $U$ and temperature $\theta$ at $t=0.1$. Kernels:
  1. Analytical solution; 2. Gaussian; 3. Kappa distribution with $\kappa=6$; 4. $K_3(\xi)$ in Example \ref{ex:evenp}; 5. $\calk(\xi)$ in Example \ref{ex:uneven}; 6. $\calk(\xi)$ in Example \ref{ex:nonhyp}; 7. $K_{25}(\xi)$ in Example \ref{ex:evenp}.}
  \label{fig:col}
\end{figure}

Fig. \ref{fig:ncol} shows the spatial profiles of the macroscopic quantities $(\rho, u, \theta)$ at $t = 0.1$ for the free-molecular case.
Both the simulated results and analytical solutions are plotted.
It is seen that the kernel greatly affects the simulation results of this highly non-equilibrium flow. The Gaussian kernel, the kappa distribution in Example \ref{ex:kappa} with $\kappa=6$ and the even polynomial $\calk_3(\xi)$ in Example \ref{ex:evenp} all satisfy the structural stability condition, and exhibit similar precision in the simulation.
The results with the uneven kernel in Example \ref{ex:uneven} can still roughly reproduce the profiles but shows larger errors than the former kernels. Notice that the corresponding moment system is strictly hyperbolic, while whether the structural stability condition (III) is respected remains unclear.

On the other hand, Figs. \ref{fig:ncol} \textcolor{blue}{(b1)-(b3)} include the results from two improper kernels. The kernel in Example \ref{ex:nonhyp} yields a non-hyperbolic system, leading to huge unphysical peaks (termed `$\delta$-shocks') in the region where flow quantities change drastically. This is a common phenomenon for non-hyperbolic moment systems \cite{Fox2008}.
By contrast, the kernel $\calk_{25}(\xi)$ in Example \ref{ex:evenp} is hyperbolic but violates the stability condition (III). The errors are larger than those from the kernel $\calk_3(\xi)$.

Fig. \ref{fig:col} gives the numerical results of the continuum case. The flow can be reasonably predicted, including a right-moving shock wave, a left-moving rarefaction wave and a discontinuity at $x=0$. The result indicates that the continuum flow regime may be less sensitive to the choice of the kernels.

\section{Conclusions}
\label{sec:conclusions}

This paper is concerned with a class of moment closure systems derived with an extended quadrature method of moments (EQMOM) for the one-dimensional BGK equation.
The class is characterized with a kernel function and the unknown distribution is approximated with Ansatz Eq.(\ref{eq:feqmom}).
We investigate the realizability of the extended method of moments (see Theorem \ref{prop:sigpoly} \& Corollary \ref{thm:inj}).
A sufficient condition (Theorem \ref{prop:schy2}) on the kernel is identified for the EQMOM-derived moment systems to be strictly hyperbolic.

Furthermore, sufficient and necessary conditions are established for the two-node systems to be well-defined and strictly hyperbolic, and to preserve the dissipation property of the kinetic equation.
For normalized kernels, the condition is $\frkm_4 \ge 3+ \frac{9}{8}\frkm_3^2$ for the well-definedness, where $\frkm_3$ and $\frkm_4$ are the 3rd and 4th moments of the kernel function.
When the kernel is even, the conditions are $3\le \frkm_4<6$ for hyperbolicity and $3\le \frkm_4<5$ for the dissipativeness corresponding to the $H$-theorem of the kinetic equation.

In addition, we present a number of examples of the kernel functions and examine their performance numerically. Precisely, we use a Riemann problem of the Euler equations to show that the kernel functions produce satisfactory results if they satisfy the conditions required by our theoretical results; otherwise they lead to spurious results.

\section*{Acknowledgments}
\label{sec:acknowledgments}

This work is supported by the National Key Research and Development Program of China (Grant no. 2021YFA0719200) and the National Natural Science Foundation of China (Grant no. 12071246).
The authors are grateful to Prof. Shuiqing Li and Mr. Yihong Chen at Tsinghua University for insightful discussions.


\begin{thebibliography}{00}


    \bibitem{ALL2016} {G. W. Alldredge, R. Li, W. Li}, {Approximating the {m}-2 method by the extended quadrature method of moments for radiative transfer in slab geometry}, {\it Kinet. Relat. Models}, {\bf 9} (2016) 237--249.

    \bibitem{bgk1954} {P. L. Bhatnagar, E. P. Gross, M. Krook}, {A model for collision processes in gases. {I}. small amplitude processes in charged and neutral one-component systems}, {\it Phys. Rev.}, {\bf 94} (1954) 511--525.

    \bibitem{Cai13} {Z. Cai, Y. Fan, R. Li}, {Globally hyperbolic regularization of {G}rad's moment system}, {\it Commun. Pur. Appl. Math.}, {\bf 67} (2013) 464--518.

    \bibitem{Chalons2010} {C. Chalons, R. O. Fox, M. Massot}, {A multi-{G}aussian quadrature method of moments for gas-particle flows in a {LES} framework}, in {\it Proceedings of the Summer Program 2010}, Center for Turbulence Research, Stanford University, Stanford, CA, (2010) 347--358.

    \bibitem{Chalons2012} {C. Chalons, D. Kah, M. Massot}, {Beyond pressureless gas dynamics: Quadrature-based velocity moment models}, {\it Commun. Math. Sci.}, {\bf 10} (2012) 1241--1272.

    \bibitem{Chalons2017} C. Chalons, R. Fox, F. Laurent, M. Massot, A. Vi$\acute{e}$, Multivariate Gaussian extended quadrature method of moments for turbulent disperse multiphase flow, {\it Multiscale Model. Simul.} {\bf 15} (2017) 1553--1583.

    \bibitem{CR14} {Y. Cheng, J.A. Rossmanith}, {A class of quadrature-based moment-closure methods with application to the Vlasov-Poisson-Fokker-Planck system in the high-field limit}, {\it J. Comput. Appl. Math.}, {\bf 262} (2014) 384--398.

    \bibitem{Di2017} {Y. Di, Y. Fan, R. Li, L. Zheng}, {Linear stability of hyperbolic moment models for {B}oltzmann equation}, {\it Numer. Math. Theor. Meth. Appl.}, {\bf 10} (2017) 255--277.

    \bibitem{Dima2014} {G. Dimarco, L. Pareschi}, {Numerical methods for kinetic equations}, {\it Acta Numerica}, {\bf 23} (2014) 369--520.

    \bibitem{Fox2008} R. Fox, A quadrature-based third-order moment method for dilute gas-particle flows, {\it J. Comput. Phys.} {\bf 227} (2008) 6313--6350.

    \bibitem{Grad1949} H. Grad, On the kinetic theory of rarefied gases, {\it Comm. Pure Appl. Math.}, {\bf 2} (1949), 331--407.

    \bibitem{Harris2004} S. Harris, {\it An Introduction to the Theory of the Boltzmann Equation}, Dover Publications, New York, 2004.

    \bibitem{Huang2020} {Q. Huang, S.Q. Li, W.-A. Yong}, {Stability analysis of quadrature-based moment methods for kinetic equations}, {\it SIAM J. Appl. Math.} {\bf 80} (2020) 206--231.

    \bibitem{Huang2022} {Q. Huang, J. Koellermeier, W.-A. Yong}, {Equilibrium stability analysis of hyperbolic shallow water moment equations}, {\it Math. Meth. Appl. Sci.} {\bf 45} (2022) 6459--6480.

    \bibitem{KoRo2020} {J. Koellermeier and M. Rominger}, {Analysis and numerical simulation of hyperbolic shallow water moment equations}, {\it Commun. Comp. Phys.}, {\bf 28} (2020), 1038--1084.

    \bibitem{MarFox2013} {D. L. Marchisio, R. O. Fox}, {\it Computational Models for Polydisperse Particulate and Multiphase Systems}, Cambridge University Press, Cambridge, 2013.

    \bibitem{Mc1997} R. McGraw, Description of aerosol dynamics by the quadrature method of moments, {\it Aerosol Sci. Technol.}, {\bf 27} (1997), 255--265.

    \bibitem{Mieu2000} L. Mieussens, {Discrete velocity model and implicit scheme for the {BGK} equation of rarefield gas dynamics}, {\it Math. Mod. Meth. Appl. Sci.}, {\bf 10} (2000) 1121--1149.

    \bibitem{PL2010} {V. Pierrard and M. Lazar}, {Kappa Distributions: Theory and Applications in Space Plasma}, {\it Sol. Phys.}, {\bf 267} (2010) 153--174.

    \bibitem{Pigou2018} {M. Pigou, J. Morchain, P. Fede, M. I. Penet, G. Laronze}, {New developments of the Extended Quadrature Method of Moments to solve Population Balance Equations}, {\it J. Comput. Phys.}, {\bf 365} (2018) 243--268.

    \bibitem{gtm277} K. Schmüdgen, {\it The Moment Problem. Graduate Texts in Mathematics, vol 277}, Springer, Cham, 2017.

    \bibitem{Serre1999} D. Serre, {\it Systems of Conservation Laws 1: Hyperbolicity, Entropies, Shock Waves}, Cambridge University Press, Cambridge, 1999.

    \bibitem{Struchtrup2003} {H. Struchtrup and M. Torrilhon}, {Regularization of Grad's 13 moment equaitons: Derivation and linear analysis}, {\it Phys. Fluids}, {\bf 15} (2003) 2668--2680.

    \bibitem{Toro2009} E. F. Toro, {\it Riemann {S}olvers and {N}umerical {M}ethods for {F}luid {D}ynamics}, 3rd edition, Springer, New York, 2009.

    \bibitem{Yong1999} W.-A. Yong, {Singular perturbations of fisrt-order hyperbolic systems with stiff source terms}, {\it J. Differ. Equations}, {\bf 155} (1999) 89--132.

    \bibitem{Yong2001} W.-A. Yong, {Basic Aspects of Hyperbolic Relaxation Systems}, in {\it Advances in the Theory of Shock Waves}, Birkh{\"a}user Boston, Boston, MA, (2001) 259--305.

    \bibitem{Yuan2012} {C. Yuan, F. Laurent, R.O. Fox}, {An extended quadrature method of moments for population balance equations}, {\it J. Aerosol Sci.}, {\bf 51} (2012) 1--23.

    \bibitem{Zhao2017} {W. Zhao, W.-A. Yong, L.-S. Luo}, {Stability analysis of a class of globally hyperbolic moment systems}, {\it Commun. Math. Sci.}, {\bf 15} (2017) 609--633.

\end{thebibliography}
\end{document}